\newtheorem{example}{Example}[section]
\newtheorem{remark}{Remark}
\def\equationautorefname~#1\null{ (#1)\null}
\newcommand{\td}{\tilde}
\newcommand{\mbb}{\mathbb}
\newcommand{\mcal}{\mathcal}
\newcommand{\parSp}{\mcal P}		
\newcommand{\opn}{\operatorname}
\newcommand{\hsp}{\hspace{0.1cm}}
\newcommand{\hspB}{\hspace{0.3cm}}
\newcommand{\pd}{\partial}
\newcommand{\samZ}[1]{z^{(#1)}}
\newcommand{\idFull}{\mcal E_{full}}
\newcommand{\zRef}[1]{z_{*}^{(#1)}}
\newcommand{\shiftOp}[1]{\mcal T\left[{#1}\right]}
\DeclareMathOperator*{\argmin}{arg\,min}
\newcommand{\resFV}{\opn{Res}}
\tikzset{decorate sep/.style 2 args=
{decorate,decoration={shape backgrounds,shape=circle,shape size=#1,shape sep=#2}}}
\tikzset{cross/.style={cross out, draw=black, minimum size=2*(#1-\pgflinewidth), inner sep=0pt, outer sep=0pt},
cross/.default={1pt}}
\title{Hyper-reduction for parametrized transport dominated problems via online-adaptive reduced meshes\thanks{Submitted to the editors xxxx\funding{N.S and S.G are supported by the German Federal Ministry for Economic Affairs and Energy (BMWi) in the joint project "MathEnergy - Mathematical Key Technologies for Evolving Energy Grids", sub-project: Model Order Reduction (Grant number: 0324019B).}}}
\author{Neeraj Sarna\thanks{Corresponding author, Max Planck Institute for Dynamics of Complex Technical Systems, Sandtorstr 1, 39106, Magdeburg, Germany, \email{sarna@mpi-magdeburg.mpg.de}}\and Sara Grundel\thanks{Max Planck Institute for Dynamics of Complex Technical Systems, Sandtorstr 1, 39106, Magdeburg, Germany, \email{grundel@mpi-magdeburg.mpg.de}}
}
\begin{document}
\nolinenumbers
\maketitle

\begin{abstract}
We propose an efficient residual minimization technique for the nonlinear model-order reduction of parameterized hyperbolic partial differential equations. Our nonlinear approximation space is a span of snapshots evaluated on a shifted spatial domain, and we compute our reduced approximation via residual minimization. To speed-up the residual minimization, we compute and minimize the residual on a (preferably small) subset of the mesh, the so-called reduced mesh. 
Due to the nonlinearity of our approximation space we show that, similar to the solution, the residual also exhibits transport-type behaviour. To account for this behaviour, we introduce online-adaptivity in the reduced mesh by "moving" it along the spatial domain with parameter dependent shifts. We also present an extension of our method to spatial transforms different from shifting. Numerical experiments showcase the effectiveness of our method and the inaccuracies resulting from a non-adaptive reduced mesh. 
\end{abstract}

\section{Introduction}
For some solution $u(x,z)\in\mbb R$, consider a parametrized partial differential equation (PDE) given as
\begin{gather}
\pd_t u(x,z) + \mcal L(u(x,z),z) = 0,\hspB\forall (x,t,z)\in\Omega\times D\times \mcal Z. \label{evo gen}
\end{gather}
Here, $x\in\Omega\subset \mbb R^d$ is a point in the space domain $\Omega$, $\mcal Z\subset \mbb R^{p+1}$ is some parameter domain, $t\in D$ is a point in the time-domain and $\mcal L$ is some spatial differential operator. For some finite time $T>0$, we include the time-domain $D:=[0,T]$ in the parameter domain $\mcal Z$ and express $\mcal Z$ as $\mcal Z = D\times \mcal P$, where $\mcal P\subset\mbb R^p$ is some additional parameter domain. The solution's dependency on $\mu\in\mcal P$ can encode, for instance, the change in the material properties, variation in length scales, changes in the background temperature, etc. The later sections of our article further elaborate on the relevance of $\mcal P$, we refer to \cite{RBBook} for additional examples. 

An exact solution to the above equation is often unavailable and one seeks a numerical approximation 
$$
u(\cdot,z)\approx u_N(\cdot,z)\in\mcal X_N,
$$
with $\mcal X_N$ being a $N$-dimensional finite-volume/element/difference-type space. We refer to $u_N$ as the full-order model (FOM). In a multi-query setting where solutions at several different parameter instances are needed, due to the high-dimensionality of $\mcal X_N$, computing a FOM is unaffordable. This motivates one to consider a reduced-order model (ROM).

A ROM splits the solution algorithm into an offline and an online phase and performs most of the expensive computations offline, thus making the online phase efficient. For some finite number of training parameters $\{\samZ{i}\}_{i=1,\dots,m}\subset\mcal Z$, the offline phase computes the set of solution snapshots $\{u_N(\cdot,\samZ{i})\}_{i}$ that are used by the online phase to efficiently approximate the FOM. If the number of snapshots required to reasonably approximate the FOM are sufficiently small, then one can expect a ROM to be more efficient than the FOM.  We refer to the review article \cite{PeterReview} and the later sections of our article for further details of the offline and the online phase.

This work focuses on parametrized hyperbolic PDEs. For such equations, there is ample numerical and analytical evidence indicating that a ROM based on a linear approximation space is inefficient/inaccurate---a large number of solution snapshots are required to reasonably approximate the FOM. The inefficiency arises from the poor approximability of the solution set $\{u(\cdot,z)\hsp :\hsp z\in\mcal Z\}$ (or the so-called solution manifold) in a linear space---we refer to \cite{PeterBook,RBHyp,Benjamin2018model,Welper2017,Metric2019,Nair,Cagniart2019} for proofs related to the slow $m$-width decay of these solution sets and the related numerical experiments. Poor accuracy of a linear approximation motivates us to consider a nonlinear approximation space. We consider a nonlinear approximation space based upon transformed snapshots, an introduction to which is as follows.

\subsection{The transformed snapshot approach}
We introduce a spatial transform $\varphi(\cdot,z,\samZ{i}):\Omega\to\Omega$, and approximate the solution $u_N(\cdot,z)$ in the space
\begin{gather}
\mcal X_m(z):=\Pi(\opn{span}\{u_N(\varphi(\cdot,z,\samZ{i}),\samZ{i})\}_{i\in\Lambda(z)}).\label{def Xm}
\end{gather}
Assuming that $\mcal X_N\subset L^2(\Omega)$, $\Pi$ is a projection operator from $L^2(\Omega)$ to $\mcal X_N$ and will be useful later when we operate a discretization of the operator $\mcal L$ on functions in $\mcal X_m(z)$.
The set $\Lambda(z)\subseteq \{1,\dots,m\}$ is some $z$-dependent index set that selects solution snapshots used to approximate $u_N(\cdot,z)$. A preferred choice for $\Lambda(z)$ is the index of parameter samples that lie in some neighbourhood of the target parameter $z$ \cite{Nair,AbgrallL1}. 

For a detailed discussion on the role of $\varphi$, we refer to \cite{Welper2017,WelperHighRes}, here, we summarize the basic idea. Usually, the exact solution is discontinuous along the parameter domain i.e., the function $u(x,\cdot)$ is discontinuous. This prohibits its accurate approximation in a low-dimensional linear space. Therefore, for all $z\in\mcal Z$, we introduce a $\varphi(x,z,\hat z)$ such that the \textit{transformed solution} $u(\varphi(x,z,\cdot),\cdot)$ is (at least) not discontinuous and sufficiently regular. This allows for a low-dimensional linear reduced approximation of the transformed solution. Indeed, $\mcal X_m(z)$ is a standard linear reduced-basis approximation space for the transformed solution set $\{u(\varphi(\cdot,z,\hat z),\hat z)\hsp :\hsp \hat z\in\mcal Z\}$ with the snapshots collected at $\hat z\in \{\samZ{i}\}_i$. For further clarity on the role of $\varphi$, we refer to the example in \Cref{app: example}. Note that whether such a $\varphi$ always exists for a general hyperbolic PDE is unclear, as yet. In the following, we will make due with an ansatz for $\varphi$ that is accurate for the numerical experiments considered later. 

\subsubsection{The online and the offline phase}
We focus on devising an efficient algorithm to approximate the FOM in $\mcal X_m(z)$. Following is a generic description of the online and the offline phases that we consider.
\begin{enumerate}
\item \textit{Offline phase:} compute the solution snapshots $\{u_N(\cdot,z^{(i)})\}_i$ and the snapshots of the spatial transform $\{\varphi(\cdot,z^{(j)},z^{(i)})\}_{i,j}$. 
\item \textit{Online phase:} perform two steps (a) using the snapshots $\{\varphi(\cdot,z^{(j)},z^{(i)})\}_{i,j}$, approximate $\varphi(\cdot,z,z^{(i)})$ by $\varphi_m(\cdot,z,z^{(i)})$, and (b) approximate $u_N(\cdot,z)$ by \begin{gather}
u_N(\cdot,z)\approx u_m(\cdot,z) \in \mcal X_m(z), \label{def um}
\end{gather}
where, without a change in notation, in $\mcal X_m(z)$, $\varphi$ is replaced by $\varphi_m$.

\end{enumerate}
Note that as compared to a linear ROM (see \cite{PeterBook} for details), the above offline-online stages have a few extra steps, the cost of which should be compensated by the superior accuracy of the nonlinear ROM.

For the online phase, the literature offers three different techniques to compute the spatial transform $\varphi_m$ and the reduced solution $u_m$.
\begin{enumerate}
\item \textit{The data-driven approach} that computes both $\varphi_m$ and $u_m$ via some linear/nonlinear interpolation and discards the underlying PDE \cite{Welper2017}.
\item \textit{The semi-PDE approach} that computes $\varphi_m$ in a data-driven manner but computes $u_m$ using the PDE \cite{Nair,RegisterMOR,mojgani2020}.
\item \textit{The PDE approach} that computes both $\varphi_m$ and $u_m$ using the PDE \cite{MATS,Cagniart2019}. 
\end{enumerate}

We consider the semi-PDE approach. We use a Lagrange polynomial interpolation and residual minimization to compute $\varphi_m$ and $u_m$, respectively. The following reasons motivate our choice. (i) We suspect that by discarding the PDE in the online phase, one might miss out on some physics of the problem, resulting in inaccuracies---similar is the motivation behind physics-based neural networks, which, to enhance the accuracy, impose PDE based constraints on the data-driven nonlinear regression problems \cite{NNPDE}. (ii) As \cite{MATS} suggests, for a purely PDE based approach, one might need the characteristic curves of the PDE to compute $\varphi_m$. A characteristics based approach can result in a highly accurate $\varphi_m$ but its complexity can limit it to one-dimensional problems. 

As for the offline phase, to compute the solution snapshots, we use a first-order finite-volume and an explicit Euler time-stepping scheme. To compute the snapshots of the spatial transform, we assume that $\varphi(\cdot,z,\hat z)$ is a spatial shift function i.e.,
\begin{gather}
\varphi(\cdot,z,\hat z)= \Theta[c(z,\hat z)]\hsp\text{where}\hsp \Theta[c(z,\hat z)](x):= x-c(z,\hat z),\hspB \forall z,\hat z\in\mcal Z.\label{ansatz varphi}
\end{gather}
We compute the spatial shift $c(z,\hat z)\in\mbb R^d$ via L2-minimization \cite{RimTR}. We acknowledge that the above ansatz has limited applicability. As the numerical experiments (and the discussion in \cite{Welper2017} and \Cref{sec: snap sptransf}) indicate, its applicability is limited to problems with a single large discontinuity or to multiple discontinuities moving with the same velocity. To cater to a broader class of problems, we will require a more sophisticated $\varphi$ than that considered above---a few example can be found in \cite{Welper2017,WelperHighRes,Nair,RimOT}. We provide an extension of our work to such a general $\varphi$. However, performing numerical experiments with a different $\varphi$ than above is left as a part of our future work. 

\subsection{Residual minimization} Performing residual minimization in the online phase is (at least) as expensive as computing the FOM \cite{GNAT,Nair}. This is undesirable because despite the availability of an accurate nonlinear approximation space, our ROM will be inefficient. To reduce the computational cost, we perform residual minimization on an online-adaptive reduced mesh, where a reduced mesh refers to a subset of the full mesh. The online-adaptivity allows the reduced mesh to change with the parameter. This accounts for the transport-type behaviour of the residual induced by the nonlinearity of the approximation space $\mcal X_m(z)$. We emphasis that the transport-type behaviour is the reason why the standard non-adaptive reduced mesh techniques (considered in \cite{GNAT,KarenColloc,Astrid}) are ineffective---numerical experiments will provide further elaboration.

An offline/online strategy computes the reduced mesh. The offline phase---using any of the techniques from \cite{Astrid,KarenColloc,GNAT}---collects snapshots of the residual and uses them to compute a reduced mesh. The online phase, capturing the transport in the residual, transports/moves the reduced mesh along the spatial domain. We show that this online phase is efficient on a Cartesian mesh and on an unstructured mesh super-imposed on an auxiliary Cartesian mesh, the so-called structured auxiliary mesh (SAM) \cite{SAM}. Note that here, efficiency refers to the scaling of the computational cost with the size of the reduced mesh and not with the dimension of the FOM.

Even for the simple spatial transform given above in \eqref{ansatz varphi}, to the best of our knowledge, there do not exist hyper-reduction techniques that perform an efficient residual minimization  for the nonlinear reduction of hyperbolic problems. In \cite{KevinAuto}, authors propose an auto-encoder based nonlinear approximation and use residual minimization to compute a ROM, but do not cater to making the residual minimization efficient. In \cite{Nair}, authors use a fixed non-adaptive reduced mesh to speed-up residual minimization. However, due to the transport-type behaviour of the residual, at least for the test cases that we consider, such an approach is inefficient. In \cite{Cagniart2019}, authors propose to speed-up residual minimization via a fast computation of the (L2) inner-products. However, due to a lack of numerical evidence, it is unclear whether this technique results in a significant speed-up.

We note that (similar to \cite{Welper2017,Nair,sPOD}) our method is Eulerian as opposed to the Lagrangian methods considered in \cite{RegisterMOR,Rowley2001,frozenROM}. In the Lagrangian methods, using the spatial transform $\varphi$, one can express the parametrized PDE in the Lagrangian coordinates and reduce the resulting equation using standard techniques. A comparison of the Eulerian methods to the Lagrangian ones is left as a part of our future work. Nonetheless, we point out that expressing the PDE in Lagrangian coordinates introduces space-time gradients of the spatial transform $\varphi$. We speculate that ill-conditioned gradients can lead to stability issues in the Lagrangian methods and that no such stability issues should arise with the Eulerian methods. Further rigorous investigation is needed to examine our speculation. 

Apart from snapshot transformation, other techniques to construct a nonlinear approximation space include the use of auto-encoders \cite{KevinAuto}, the embedding of the solution manifold in a Wasserstein metric space \cite{Metric2019}, the online adaptivity of basis \cite{Benjamin2018model,LaxPairs}, and the method of freezing \cite{Sapsis2018,frozenROM}. We leave an exhaustive comparison to these other techniques as a part of our future work.
 
 \subsection{Organisation}
We have organized the rest of the article as follows. \Cref{sec: FOM} presents our FOM. The discussion is a brief recall of the standard first-order finite-volume methods. \Cref{sec: ROM} presents our ROM. We discuss the sampling of the parameter domain, we concretely define the transformed snapshot based approximation space, we present the residual minimization technique and explain the reason behind its complexity scaling with the dimension of the FOM. \Cref{sec: hyper-reduction} presents a online-adaptive reduced mesh technique to speed-up residual minimization. This technique attempts to make our ROM more efficient than the FOM. \Cref{sec: discussion} provides a rationale behind adapting the reduced mesh, and it extends our technique to structured auxiliary meshes (SAMs) and to spatial transforms different than shifting. \Cref{sec: num exp} present numerical examples showcasing the accuracy of our method.
\section{Full-order model (FOM)}\label{sec: FOM} To compute our FOM, we consider an explicit Euler time-stepping scheme and a first-order finite-volume (FV) spatial discretization. The details are as follows. For a parametrized hyperbolic conservation law, the differential operator $\mcal L$ appearing in \eqref{evo gen} is given as 
\begin{equation}
\mcal L(\cdot,z) = \nabla \cdot f(\cdot,z)\hspB\forall z\in\mcal Z.\label{hyp law}
\end{equation}
The flux-function $f(\cdot,z):\mbb R\to\mbb R^d$ is assumed to be convex and at least twice-differentiable. For all $(x,\mu)\in \mbb R^d\times \mcal P$, the initial conditions read $u(x,t = 0,\mu) = u_0(x,\mu)$. We assume that $u_0(\cdot,\mu)$ is compactly supported. As a result, due to a finite speed of propagation, for any finite final time $T$, the solution $u(\cdot,z)$ is also compactly supported. Therefore, we consider a bounded and connected spatial domain $\Omega\subset\mbb R^d$, which, for all $z\in\mcal Z$, contains the support of $u(\cdot,z)$.  Along the boundary $\pd\Omega$ and for all $z\in\mcal Z$, we prescribe $u(\cdot,z) = 0$. 

A discretization of the space-time domain is as follows. We consider a two-dimensional square spatial domain\footnote{The differences arising from a non-Cartesian discretization of a square domain are outlined later. An extension to a cuboid discretized with a Cartesian mesh is straightforward. An extension to arbitrary curved domains is left as a part of our future work.} i.e., $d=2$ and $\Omega = [0,1]^2$. We consider $N_x\in\mbb N$ number of mesh elements in each spatial direction, resulting in a grid-size of $\Delta x := 1/N_x$. 
To discretize the time-domain $D$, we consider the discrete time-steps $\{t_k\}_{k=0,\dots,K}$ ordered such that 
\begin{gather}
0=t_0 < t_1\dots < t_{K}=T.\label{time steps}
\end{gather}
For simplicity, we consider a constant time-step of $\Delta t$.

An explicit Euler time stepping scheme and a FV spatial discretization of the evolution equation \eqref{evo gen} provides
\begin{gather}
U(t_{k+1},\mu)  = U(t_k,\mu) + \Delta t\times \mcal F(U(t_k,\mu)),\hspB\forall k\in \{0,\dots,K-1\},\mu\in\parSp. \label{FOM}
\end{gather}
The vector $U(t_{k},\mu)\in\mbb R^N$ collects the FV degrees-of-freedom of our FOM. Note that for our FV discretization, $N = N_x^2$. The operator $\mcal F:\mbb R^{N}\to\mbb R^{N}$ is a result of a FV discretization and its explicit expression can be found in any standard textbook (for instance, \cite{LevequeBook}) on FV methods. As the numerical flux (contained in $\mcal F$), we choose the Local-Lax-Friedrich (LLF) flux---a different choice does not change the forthcoming discussion.
We choose the time-step $\Delta t$ to satisfy the CFL-condition
\begin{gather}
\Delta t\leq \frac{\Delta x}{\underset{x\in\Omega,z\in\mcal Z}{\opn{sup}}2|\pd_u f(u,z)\rvert_{u=u_N(x,z)}|}. \label{CFL}
\end{gather}
Under the assumption that for all $\mu\in \mcal P$, $u_0(\cdot,\mu)\in L^\infty(\Omega)$, the CFL-condition ensures that our FOM is $L^{\infty}$ stable \cite{FVNotes}.

\section{Reduced-order model (ROM)}\label{sec: ROM}
We recall and elaborate on the summary of the offline and the online phases presented in the introduction, the related details are discussed later.
\subsection{Summary: Offline phase}
The offline phase consists of the following steps. 
\begin{enumerate}
\item Collect the parameter samples $\{z^{(i)}\}_{i=1,\dots,m}\subset\mcal Z.$
We sample the parameter domain uniformly, the details are given in \Cref{sec: sample par}. 
\item Compute the solution snapshots at the parameter samples. 
\item Compute the snapshots of the spatial transform $\{\varphi(\cdot,z^{(j)},z^{(i)})\}_{i,j}$. As we made the spatial shift ansatz \eqref{ansatz varphi}, we will compute the snapshots of the shifts $\{c(z^{(j)},z^{(i)})\}_{i,j}$---\Cref{sec: snap sptransf} outlines the technique. The technique builds upon the L2-minimization techniques developed earlier in \cite{Welper2017,RimTR,RegisterMOR,Nair}. 
\end{enumerate}

\subsection{Summary: Online phase}
For a target parameter $z=(t,\mu)\not \in \{z^{(i)}\}_i$, the following steps of the online phase compute a reduced approximation $u_m(\cdot,z)$.
\begin{enumerate}
\item \textit{Introduce discrete time-steps:} Consider the discrete time-steps $0=t_0 < \dots <  t_{K^*} = t$. Same as the FOM, consider a constant time-step size of $\Delta t$. For simplicity, we do not explore the possibility of a different time-step for the ROM; see \cite{Cagniart2019} for details. For each $t_k\in \{t_j\}_{j}$, perform the following steps.
\begin{enumerate}
\item[(a)] \textit{Approximate $\{\varphi(\cdot,t_k,\mu,z^{(i)})\}_i$ or the shifts $\{c(t_k,\mu,z^{(i)})\}_i$:} Using the snapshots of the shifts $\{c(z^{(j)},z^{(i)})\}_{i,j}$, approximate $\{c(t_k,\mu,z^{(i)})\}_i$ via
\begin{gather}
c(t_k,\mu,z^{(i)})\approx c_m(t_k,\mu,z^{(i)}),\hspB\forall i\in \{1,\dots,m\}.  \label{def cm}
\end{gather}
To compute $c_m(t_k,\mu,z^{(i)})$, one can use any of the linear/nonlinear regression techniques. 
With a uniform sampling of the parameter domain (given in \Cref{sec: sample par}), we consider a Lagrange polynomial interpolation/regression. Note that the above approximation of the shift implies 
$$
\varphi(\cdot,t_k,\mu,z^{(i)})\approx \varphi_m(\cdot,t_k,\mu,z^{(i)})=\Theta[c_m(t_k,\mu,z^{(i)})],
$$
with $\Theta$ being the spatial shift given in \eqref{ansatz varphi}.
\item[(b)] \textit{Compute the reduced approximation:} find $u_m(\cdot,t_k,\mu)$ in $\mcal X_m(t_k,\mu)$ via residual minimization given as \cite{AbgrallL1,Nair,Cagniart2019}
\begin{equation}
\begin{aligned}
u_m(\cdot,t_k,\mu)=\argmin_{v\in\mcal X_m(t_k,\mu)}&\|v-u_m(\cdot,t_{k-1},\mu)\\
                                                    &+\Delta t\times \mcal L_N(u_m(\cdot,t_{k-1},\mu),t_{k-1},\mu)\|_{L^2(\Omega)}. \label{res min raw}
\end{aligned}
\end{equation}
We initialize with $u(t_0,\mu) = \argmin_{v\in\mcal X_m(t_0,\mu)}\|v-u_0(\cdot,\mu)\|_{L^2(\Omega)}$. 
The approximation space $\mcal X_m(t_k,\mu)$ is given in \eqref{def Xm}. The operator $\mcal L_N$ approximates the evolution operator $\mcal L$. Its precise form is not important here, and we discuss the details in \Cref{sec: res min}. As \Cref{sec: comp cost} clarifies, the complexity of solving the above problem scales with the dimension of the FOM therefore, we later equip it with hyper-reduction. 
\end{enumerate}

\end{enumerate}

\begin{remark}[Computing the projection operator $\Pi$]\label{remark: inclusion}
On a Cartesian mesh, we define $\Pi$ (given in \eqref{def Xm}) by approximating the shifts by an integer multiple of $\Delta x$. This provides 
\begin{equation}
\begin{aligned}
\Pi\left(\opn{span}\{u_N(\right.&\left.\Theta[c_m(t_k,\mu,z^{(i)})],z^{(i)})\}_{i\in\Lambda(t_k,\mu)}\right) \\
:= &\opn{span}\{u_N(\Theta\left[\left\lfloor \frac{c_m(t_k,\mu,z^{(i)})}{\Delta x}\right\rfloor \Delta x\right],z^{(i)})\}_{i\in\Lambda(t_k,\mu)},
\end{aligned}
\end{equation}
where $\lfloor n\rfloor$ represents the greatest integer less than equal to $n$.
One can check that the space on the right is included in $\mcal X_N$. The above definition is faster to compute than an orthogonal projection operator, which requires a numerical quadrature routine. However, on a general unstructured mesh, the above space on the right is not necessarily included in $\mcal X_N$ and we resort to using the orthogonal projection operator. This slightly increase the computation cost of the algorithm, \Cref{sec: extend SAM} discusses the details. 
\end{remark}
\begin{remark}[Boundary conditions for shifting]
For $x\not\in\Omega$ and some shift $c^*\in\mbb R^d$, we define $\Theta[c^*](x) = 0$, which is consistent with the fact that $u(\cdot,z)$ is compactly supported in $\Omega$.
\end{remark}
\subsection{Sampling the parameter domain}\label{sec: sample par}
We consider a two-dimensional parameter domain $\mcal Z$---an extension to higher dimensions is straightforward. We assume that $\mcal Z$ is (or can be mapped via a bijection to) a rectangle. We take $N_t$ and $N_\mu$ uniformly placed samples from $D$ and $\mcal P$, respectively. The vertices of $\mcal Z$ are included in the samples, and the samples from $D$ are a subset of the time-instances $\{t_k\}_{k=0,\dots,K}$ used to compute the FOM. To collect the samples from $\mcal Z$, we take a tensor-product of the samples in $D$ and $\mcal P$. 

\begin{remark}[Scaling with $p$]
Uniform sampling can make snapshot computation unaffordable for large values of $p$. In that case, one can consider a greedy/sparse sampling technique \cite{RBBook,DoubleGreedy}.  
\end{remark}

\subsubsection{Neighbours under uniform sampling}
\Cref{fig: mesh} shows the parameter samples that we identify as the neighbours of a parameter. This provides the index set $\Lambda(z)$ appearing in the approximation space $\mcal X_m(z)$ given in \eqref{def Xm}. A uniform sampling of $\mcal Z$ induces a structured mesh of $(N_t-1)\times (N_\mu-1)$ parameter elements. Each parameter $z\in\mcal Z$ belongs to either a parameter element or to its boundary---generically, we denote the parameter element by $\mcal I^{\mcal Z}$  and collect its definition below. In the first case, the vertices of $\mcal I^{\mcal Z}$ are the neighbours of $z$, whereas in the latter case, the vertices of $\mcal I^{\mcal Z}$ closest to $z$ are its neighbours.


\begin{definition}[Representative element $\mcal I^{\mcal Z}$]\label{def: rep el}
We define $\mcal I^{\mcal Z}$ as $\mcal I^{\mcal Z} := (t^*_1,t^*_2) \times (\mu^*_1,\mu^*_2)$, where $t^*_i$ and $\mu_i^*$ are some elements of $D$ and $\mcal P$, respectively.
The vertices of $\mcal I^{\mcal Z}$ are given---in a counter-clockwise fashion---as $\zRef{1} = (t^*_1,\mu^*_1)$, $\zRef{2} = (t^*_2,\mu^*_1)$, $\zRef{3} = (t^*_2,\mu^*_2)$, and $\zRef{4} = (t^*_1,\mu^*_2)$. 
\end{definition}


\begin{figure}[ht!]
\centering
\begin{tikzpicture}
\draw[->] (-3,0) -- (-2,0);
\draw[->] (-3,0) -- (-3,1);
\node [] at (-3.17,1) {$\mu$};
\node [] at (-2,-0.15) {$t$};
\draw[step=1cm] (-1,-1) grid (2,2);
\foreach \i in {0,...,3}
	\foreach \j in {0,...,1}
	 {\node at (-1 + \i,1 + \j)[circle,fill,black,inner sep=1.5pt]{};
	 \node at (-1 + \i,1 - \j)[circle,fill,black,inner sep=1.5pt]{};}
\foreach \i in {0,...,3}
	\foreach \j in {0,...,2}	 
		{\node at (-1 + \i,1 - \j)[circle,fill,black,inner sep=1.5pt]{};}
\draw[draw=black,fill=gray] (-1,1) rectangle (0,2);	 
\node at (-1,1) [circle,fill,red,inner sep=1.5pt]{};
\node at (-1,2) [circle,fill,red,inner sep=1.5pt]{};
\node at (0,1) [circle,fill,red,inner sep=1.5pt]{};
\node at (0,2) [circle,fill,red,inner sep=1.5pt]{};
\node at (1,0) [circle,fill,blue,inner sep=1.5pt]{};
\node at (1,1) [circle,fill,blue,inner sep=1.5pt]{};
\draw (1,0.5) node[cross=4pt] {};
\end{tikzpicture}
\caption{\textit{Bold dots: uniform samples. Red dots: neighbours for parameters inside the gray region. Blue dots: neighbours for parameters at the crossed-out edge. }} \label{fig: mesh}
\end{figure}
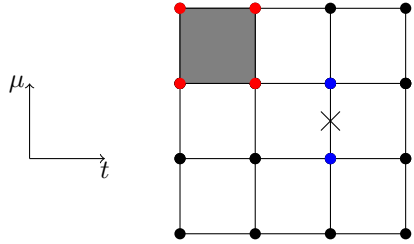

\begin{remark}[The locality of the approximation space $\mcal X_m(z)$]
Consider a $z\in\mcal I^{\mcal Z}$. With our sampling of the parameter domain, the approximation space $\mcal X_m(z)$ in \eqref{def Xm} transforms to
\begin{gather}
\mcal X_m(z) = \Pi(\opn{span}\{u_N(\varphi_m(\cdot,z,\zRef{i}),\zRef{i})\}_{i=1,\dots,4}). \label{def Xm c}
\end{gather}
The space $\mcal X_m(z)$ (and also the one considered in \cite{Nair}) is local---it only uses parameter samples that lie in some neighbourhood of the target parameter. This is consistent with the observation (see \citep{AbgrallL1,Nair}) that, eventually, only the snapshots with parameters that lie in the neighbourhood of $z$ contribute to the approximation of $u_N(\cdot,z)$---the expansion coefficients for all the other snapshots are negligible. 
\end{remark}

\subsection{Residual Minimization}\label{sec: res min} Interpreting the FOM given in \eqref{FOM} as a residual minimization problem provides
\begin{gather}
U(t_{k+1},\mu) = \argmin_{w\in\mbb R^{N}}\|\resFV(w,U(t_{k},\mu))\|^2_{l^2},\hspB\forall \mu\in\mcal P. \label{residual form FV}
\end{gather}
The residual $\resFV:\mbb R^N\times\mbb R^N\to\mbb R^N$ follows from the FOM \eqref{FOM} and is given as
\begin{gather}
\resFV(w,v) := w-v-\Delta t\times \mcal F(v). \label{def residual}
\end{gather}
Obviously, 
$\resFV(w,U(t_{k},\mu)) = 0$ for $w = U(t_{k+1},\mu) $.

We use the above formulation of the FOM to design a time-stepping scheme for our ROM. 
Let $z\in\mcal I^{\mcal Z}$, where $\mcal I^{\mcal Z}$ is the parameter element in \Cref{def: rep el}. The approximation space $\mcal X_m(z)$ is isomorphic to $ \opn{range}(A(z))$. The matrix $A(z)$ is of size $N\times 4$ and contains shifted snapshots as its columns. Equivalently,
\begin{gather}
A(z) := \left(\shiftOp{c_m(z,\zRef{1})}U(\zRef{1}),\dots,\shiftOp{c_m(z,\zRef{4})}U(\zRef{4})\right).\label{def A}
\end{gather}
For some shift $c^*\in\mbb R^d$, the operator $\shiftOp{c^*}:\mbb R^N\to\mbb R^N$ shifts a snapshot along the spatial domain---a precise definition is given below (\Cref{def: shift op}). 

To compute our ROM, instead of minimizing the residual (given in \eqref{residual form FV}) over $\mbb R^N$, we minimize it over the reduced approximation space $\opn{range}(A(t_{k+1},\mu))$. This provides the time-stepping scheme
\begin{equation}
\begin{aligned}
U_m(t_{k+1},\mu)=\argmin_{w\in \opn{range}(A(t_{k+1},\mu))}\|\resFV(w,U_m(t_{k},\mu))\|^2_{l^2},\hspB\forall \mu\in\mcal P. \label{res min temp}
\end{aligned}
\end{equation}
We initialize with $U_m(t_0,\mu):=\argmin_{w\in\opn{range}(A(t_{0},\mu))}\|w-U(t_0,\mu)\|_{l^2}$.

\begin{definition}[Shift operator]\label{def: shift op}
Let $v\in\mbb R^N$ contain the FV degrees-of-freedom of a function $g:\mbb R^d\to\mbb R$ compactly supported inside $\Omega$. Then, with $\mcal T[c^*]v\in\mbb R^N$ we denote a vector that contains the FV degrees-of-freedom of the shifted and projected function $\Pi(g(\Theta[c^*]))$, with $\Pi$ as given in \Cref{remark: inclusion}.
\end{definition}

\subsubsection{Least-squares formulation} Since $U_m(t_{k+1},\mu)\in\opn{range}(A(t_{k+1},\mu))$, we find
\begin{gather}
U_m(t_{k + 1},\mu) =A(t_{k + 1},\mu)\alpha(t_{k + 1},\mu),\label{def alpha}
\end{gather}
where $\alpha(t_{k+1},\mu)\in\mbb R^4$. With the above relation, the minimization problem \eqref{res min temp} transforms to a least-squares problem
\begin{gather}
\alpha(t_{k+1},\mu) = \argmin_{y\in\mbb R^4}\|\underline{A(t_{k+1},\mu)y-b(t_k,\mu)}\|^2_{l^2}, \label{least-square}
\end{gather}
with the vector $b(t_k,\mu)\in\mbb R^N$ given as 
\begin{gather}
b(t_k,\mu) := U_m(t_{k},\mu) + \Delta t\times \mcal F(U_m(t_{k},\mu)). \label{def b}
\end{gather}

Consider the underlined vector in the above problem \eqref{least-square}. The i-th element of this vector represents the residual in the i-th mesh element. Since we take the $l^2$-norm of the entire vector, we minimize the residual over the entire mesh. To highlight this fact, we reformulate the above problem. We collect element ids in the set $\mcal E_z \subseteq \idFull$, where
$$
\idFull := \{1,\dots,N\}. 
$$
The subscript $z\in\mcal Z$ signifies that $\mcal E_z$ can change with the parameter. Furthermore, with $\mcal E_z$, we can identify any subset of the spatial mesh. We refer to this subset as the reduced mesh---\Cref{fig: hyp-red} shows a sample reduced mesh. For simplicity, we assume that the size of $\mcal E_z$ is $z$-independent and denote it by
\begin{gather}
\#\mcal E_z = n.\label{size reduce mesh}
\end{gather}
Obviously, $n \leq N$. 

In the following, we define sub-matrices with rows indexed by the entries in $\mcal E_z$.
\begin{definition}[Sub/Reduced-matrices]\label{def: submat}
Let $V(y)\in\mbb R^{N\times n_2}$, with $y$ belonging to some set $\mcal Y$. With $V[\mcal E_z](y)$ we denote a $n\times n_1$ matrix that contains the $n$-rows of $V(y)$ indexed by $\mcal E_z$ i.e.,
\begin{gather}
\left( V[\mcal E_z](y)\right)_{ij} := \left( V(y)\right)_{ij},\hspB \forall i\in \mcal E_z, j\in \{1,\dots,n_2\}. 
\end{gather}
\end{definition}

With the list $\mcal E_z$ and the above notation, we can choose the mesh elements over which we minimize the residual. The quantity $A[\{i\}](t_{k+1},\mu)y-b[\{i\}](t_k,\mu)$ denotes the residual in the i-th mesh element---same as the i-th entry of the underlined vector in \eqref{least-square}. Thus, to minimize the residual over a reduced mesh identified with a list $\mcal E_z$, we compute
\begin{equation}
\begin{aligned}
\alpha(t_{k+1},\mu) = &\argmin_{y\in\mbb R^4}\sum_{i\in \mcal E_{t_{k+1},\mu}}\left|A[\{i\}](t_{k+1},\mu)y-b[\{i\}](t_k,\mu)\right|^2,\\
=&\argmin_{y\in\mbb R^4}\|A[\mcal E_{t_{k+1},\mu}](t_{k+1},\mu)y-b[\mcal E_{t_{k+1},\mu}](t_k,\mu)\|^2_{l^2}. \label{hyp least-square}
\end{aligned}
\end{equation}
The second equality is nothing but the definition of the $l^2$-norm. \Cref{fig: hyp-red} further elaborates on the above minimization problem.
To minimize the residual over the entire mesh---and recover the earlier least-squares problem \eqref{least-square}---one can choose $\mcal E_{t,\mu} = \idFull$, for all $(t,\mu)\in\mcal Z$.

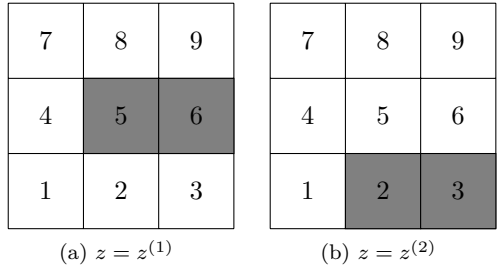
\begin{figure}[ht!]
\centering
\subfloat[$z=z^{(1)}$]{
\begin{tikzpicture}[baseline]
\draw[step=1cm] (-1,-1) grid (2,2);
\draw[draw=black,fill=gray] (0,0) rectangle (1,1);	
\draw[draw=black,fill=gray] (1,0) rectangle (2,1);	
\foreach \i in {1,...,3}
{\node at (-1 + 0.5 + \i - 1,-1 + 0.5) []{\i};}
\foreach \i in {4,...,6}
{\node at (-1 + 0.5 + \i-4,0.5) []{\i};}
\foreach \i in {7,...,9}
{\node at (-1 + 0.5 + \i-7,1.5) []{\i};} 
\end{tikzpicture}}\quad 
\subfloat[$z=z^{(2)}$]{
\begin{tikzpicture}[baseline]
\draw[step=1cm] (-1,-1) grid (2,2);
\draw[draw=black,fill=gray] (0,-1) rectangle (1,0);	
\draw[draw=black,fill=gray] (1,-1) rectangle (2,0);	
\foreach \i in {1,...,3}
{\node at (-1 + 0.5 + \i - 1,-1 + 0.5) []{\i};}
\foreach \i in {4,...,6}
{\node at (-1 + 0.5 + \i-4,0.5) []{\i};}
\foreach \i in {7,...,9}
{\node at (-1 + 0.5 + \i-7,1.5) []{\i};} 
\end{tikzpicture}}
\caption{\textit{A mesh with $N=9$ elements. The reduced mesh is a union of the gray elements. It can change with the parameter, which we refer to as online-adaptivity. With $\mcal E_z = \{1,\dots,9\}$, in \eqref{hyp least-square} we compute the residual on the entire mesh. Whereas, for (a), with $\mcal E_z = \{5,6\}$, we compute the residual only on the reduced mesh.  }} \label{fig: hyp-red}
\end{figure}

\subsection{Computational cost}\label{sec: comp cost}
We study the cost of computing the above least-squares problem. To both compute and minimize the residual, we require $\mcal O(n)$ operations, where $n = \#\mcal E_z$. To minimize the residual, we consider the \texttt{lsqminnorm} routine from \texttt{matlab}, which provides the minimum-norm solution and requires $\mcal O(n)$ operations. Furthermore, to compute the residual, we compute: 
\begin{enumerate}
\item The index set $\mcal E_{t_{k+1},\mu}$, in case it changes with ${t_{k+1},\mu}$. Otherwise---like for the above choice of $\mcal E_z$---we can compute and store $\mcal E_{t_{k+1},\mu}$ offline.
\item The matrix containing the shifted snapshots $A[\mcal E_{t_{k+1},\mu}](t_{k+1},\mu)$.
\item The vector $b[\mcal E_{t_{k+1},\mu}](t_k,\mu)$.
\end{enumerate}
For our online-adaptive technique discussed later (in \Cref{sec: adapt mesh}), the first step requires $\mcal O(n)$ operations. Furthermore, the other two steps also have $\mcal O(n)$ complexity. Two basic operations determine this complexity (i) the shifted snapshot computation, and (ii) the computation of the action of the operator $\mcal F$. We study the complexity of both these operations.

\subsubsection{Cost of computing a shifted snapshot and $\mcal F$} The routine in \Cref{algo: shifted snap} computes the shifted snapshot $\mcal T[c^*]U[\mcal E_z](z)$. Owing to \Cref{remark: inclusion}, we restrict to a shift $c^*$ that is an integer multiple of $\Delta x$. As a result, $\mcal T[c^*]U[\mcal E_z](z)$ is related to $U[\mcal E_z](z)$ via the relation given in \texttt{line-8} and its computation requires the id of the cell that contains the shifted centre. We compute this id via the \texttt{get_location_id} routine given in \Cref{algo: get location id}. The \texttt{get_location_id} routine requires $\mcal O(1)$ operations thus, \Cref{algo: shifted snap} requires $\mcal O(n)$ operations.

To compute $b[\mcal E_{z}](z)$, we require the discreet evolution operator $\mcal F[\mcal E_z](U_m(z))$. To compute this operator, for each cell in $\mcal E_z$, we perform the following operations (i) find the neighbouring cells; (ii) compute the numerical flux at every face; and (iii) integrate the numerical flux along the cell boundary. On a Cartesian mesh, each of these operations have $\mcal O(1)$ complexity. Thus, for the $n$ entries in $\mcal E_z$, we require $\mcal O(n)$ operations to compute $\mcal F[\mcal E_z](U_m(z))$.

\begin{algorithm}[ht!]
\caption{Algorithm to compute a shifted snapshot}
\begin{algorithmic}[1] \label{algo: shifted snap}
\STATE \textbf{Input} $U[\mcal E_z](z)$, $c^*$, $\mcal E_z$, $N_x$, $\Delta x$
\STATE \textbf{Output} $\shiftOp{c^*}U[\mcal E_z](z)$
\STATE $k\leftarrow 1$ 
\FOR{$i\in \mcal E_z$}
\STATE $x_i\leftarrow \texttt{get_cell_centre}(i) - c^*$  \COMMENT{The shifted cell center}
\IF{$x_i\in\Omega$}
\STATE $j\leftarrow\texttt{get_location_id}(x_i,N_x,\Delta x)$ \COMMENT{Values from inside of $\Omega$}
\STATE $\left(\shiftOp{c^*}U[\mcal E_z](z)\right)_k = \left(U[\mcal E_z](z)\right)_{j}$
\ELSE 
\STATE $\left(\shiftOp{c^*}U[\mcal E_z](z)\right)_k = 0$ \COMMENT{Prescribe boundary conditions}
\ENDIF 
\STATE $k\leftarrow k + 1$
\ENDFOR
\end{algorithmic}
\end{algorithm}

\begin{algorithm}[ht!]
\caption{\texttt{get_location_id}: given the location of a space point, computes the id of the mesh element that contains it.}
\begin{algorithmic}[1] \label{algo: get location id}
\STATE \textbf{Input} $x\in\mbb R^2$, $N_x$, $\Delta x$
\STATE \textbf{Output} \texttt{element_id}
\STATE $\texttt{id_x}\leftarrow \texttt{mod}(x(1),\Delta x)$
\STATE $\texttt{id_y}\leftarrow \texttt{mod}(x(2),\Delta x)$
\STATE $\texttt{element_id}\leftarrow (\texttt{id_y}-1)\times N_x + \texttt{id_x}$ 
\end{algorithmic}
\end{algorithm}

\section{Hyper-reduction}\label{sec: hyper-reduction}
With $\mcal E_z = \idFull$, our ROM is (at least) as expensive as the FOM, which is undesirable. While maintaining the accuracy of the ROM, we want to choose $\mcal E_z$ such that $n\ll N$. This way, one can expect the ROM to be more efficient than the FOM. We pursue two approaches to compute such a $\mcal E_z$. 
\begin{enumerate}
\item \textit{The non-adaptive technique} that keeps $\mcal E_z$ fixed in the parameter space.
\item \textit{The online-adaptive technique} where $\mcal E_z$ changes with the parameter i.e., the index set $\mcal E_z$ is online-adaptive. 
\end{enumerate}
First, we present the non-adaptive technique and its shortcomings.

\subsection{Non-adaptive technique}\label{sec: hyp non-adp} This technique consists only of an offline phase. At $\{\bar\mu_j\}_{j=1,\dots,m_{hyp}}\in\mcal P$ parameter samples, we solve the non-hyper-reduced least-squares problem in \eqref{least-square} and collect snapshots of the residuals. This provides the snapshot matrix
\begin{equation}
\begin{aligned}
\mcal S := \left(\right.&\resFV(U_m(t_{2},\bar \mu_{1}),U_m(t_{1},\bar \mu_{1})),\resFV(U_m(t_{3},\bar \mu_{1}),U_m(t_{2},\bar \mu_{1})),\dots,\\
&\left.\resFV(U_m(t_{K},\bar \mu_{m_{hyp}}),U_m(t_{K-1},\bar \mu_{m_{hyp}}))\right).  \label{snap res}
\end{aligned}
\end{equation}

To compute $\mcal E_z$, the non-adaptive technique applies \Cref{algo: compute colloc} (or any other point-selection algorithm from \cite{KarenColloc,GNAT}) to $\mcal S$. The algorithm (taken from \cite{Astrid}) selects the mesh elements with the largest $l^2$-norm of the residual taken over all the parameter samples. Note that as compared to \cite{Astrid}, we apply the algorithm directly to $\mcal S	$ and not to its POD modes. Numerical experiments suggest that both the strategies provide similar results. 

\begin{algorithm}[ht!]
\caption{Summary of the reduced mesh selection algorithm from \cite{Astrid}}
\begin{algorithmic}[1] \label{algo: compute colloc}
\STATE \textbf{Input} $\mcal S$, $n$
\STATE \textbf{Output} $\mcal E_z$
\FOR{$i\in \{1,\dots,N\}$}
\STATE $r(i) = \|\mcal S(i,:)\|_2$ \COMMENT{$\mcal S(i,:)$ denotes the $i$-th row of $\mcal S$. }
\ENDFOR
\STATE $[\texttt{r_sorted},idx] = \texttt{sort}(r)$ \COMMENT{Sort in decreasing order and $\texttt{r_sorted}=r(idx)$.}
\STATE $\mcal E_z\leftarrow idx(1:n)$
\end{algorithmic}
\end{algorithm}

\subsubsection{Shortcomings}
Our numerical experiments (and the example below) suggest that the non-linearity of the approximation space $\mcal X_m(z)$ induces a transport-type behaviour in the residual. As a result, only a large reduced mesh computed using \Cref{algo: compute colloc} can provide a reasonable accuracy---a similar observation holds for the other point-selection techniques outlined in \cite{GNAT,KarenColloc}. This is undesirable because, at least ideally, for some error tolerance of practical interest ($\|u_N(\cdot,z)-u_m(\cdot,z)\|_{L^2}\leq \texttt{TOL}$, for instance), the size of the reduced mesh should be as small as possible. \Cref{algo: compute colloc} provides a parameter-independent reduced mesh, which might not be accurate for the entire range of the parameter domain when the residual shows a transport-type behaviour. Indeed, the motivation to introduce parameter-dependence in the approximation space $\mcal X_m(z)$ was to well-approximate the transport type behaviour of the solution. A parameter-independent linear approximation space (a span of the POD basis, for instance) cannot achieve this task. For further elaboration, consider the example below. The example considers a moving step function and discusses the shortcomings of a non-adaptive reduced mesh.

\begin{example}[Non-adaptive reduced mesh for a moving characteristic function]\label{example: colloc}
Consider $g(\cdot,z) = (1 + z^2)\mathbbm{1}_{[z-0.2,z]}$, where $\mathbbm{1}_{A}$ represents a characteristic function over a set $A\subset\mbb R$. Let $z\in \mcal Z= [0,2]$, and sample $\mcal Z$ at $z^{(1)}=0$ and $z^{(2)}=2$. We approximate $g(\cdot,z)$ by $g_m(\cdot,z)\in span\{g(\Theta[c(z,z^{(i)})],z^{(i)})\}_{i=1,2}$ with $c(z,z^{(i)}) = z-z^{(i)}$. To compute $g_m(\cdot,z)$, we perform a linear interpolation between the transformed snapshots $g(\Theta[c(z,z^{(1)})],z^{(1)})$ and $g(\Theta[c(z,z^{(2)})],z^{(2)})$, which results in the error 
\begin{gather}
E(\cdot,z):=|g(\cdot,z)-g_m(\cdot,z)|=z(2-z)\mathbbm{1}_{[z-0.2,z]}.  \label{error example}
\end{gather}
We interpret $E(\cdot,z)$ as a residual. Note that, similar to $g(\cdot,z)$, the residual also shifts to the right as $z$ increases, exhibiting a transport-type behaviour.

We consider $N = 2\times 10^3$ grid points inside $[-2,2]$, and collect snapshots of $E(\cdot,z)$ at $5$ uniformly placed (excluding the endpoints) parameter samples $\{\bar z^{(i)}\}_{i=1,\dots,5}$ inside $\mcal Z$. This provides the snapshot matrix $\mcal S_{ij}=E(x_i,\bar z^{(j)})$. \Cref{fig: ex colloc} shows these snapshots. We compute a reduced mesh of size $n=100$ using \Cref{algo: compute colloc}.  The locations of the centres of a few of the reduced mesh elements, over-plotted on a few of the snapshots of the error (different from those contained in $\mcal S$), are shown in \Cref{fig: ex colloc}. We make the following observations. (i) Obviously, the reduced mesh does not change with $z$. (ii) For all the three snapshots (and also for the several others not shown in the plot), none of the reduced mesh elements lie inside the support of the residual. For a residual minimization based technique, this could either result in a trivial ROM (i.e., $u_m = 0$) or, as the later numerical experiments indicate, can make the ROM unstable. (iv) At least $800$ mesh elements are needed to have at least one reduced mesh element inside the support of $E(\cdot,z)$ for every $z\in \mcal Z$. Note that $800$ is $40\%$ of the total number of grid points $N$. For such a large reduced mesh, we do not expect to achieve a significant speed-up as compared to the non-hyper-reduced residual minimization.
\begin{figure}[ht!]
\centering
\subfloat[]{\includegraphics[width=2.5in]{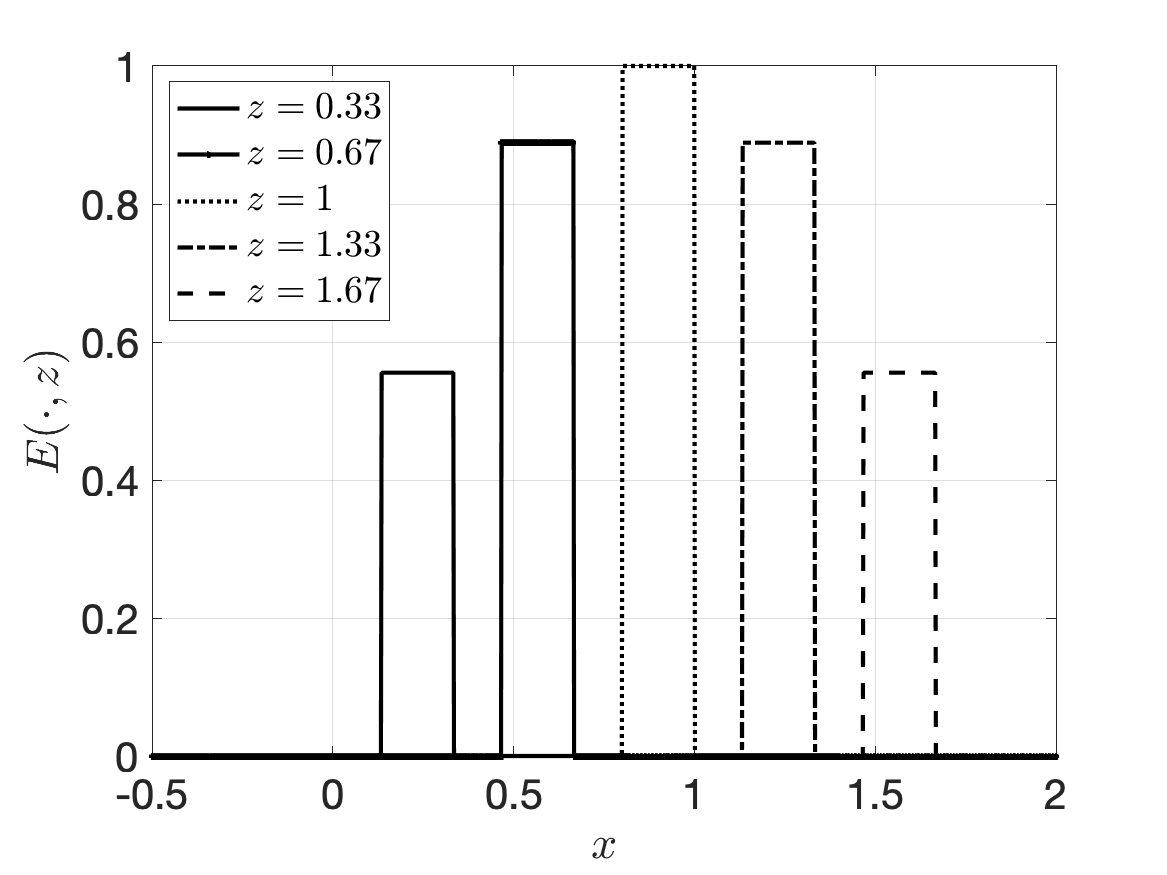}}
\hfill
\subfloat[]{\includegraphics[width=2.5in]{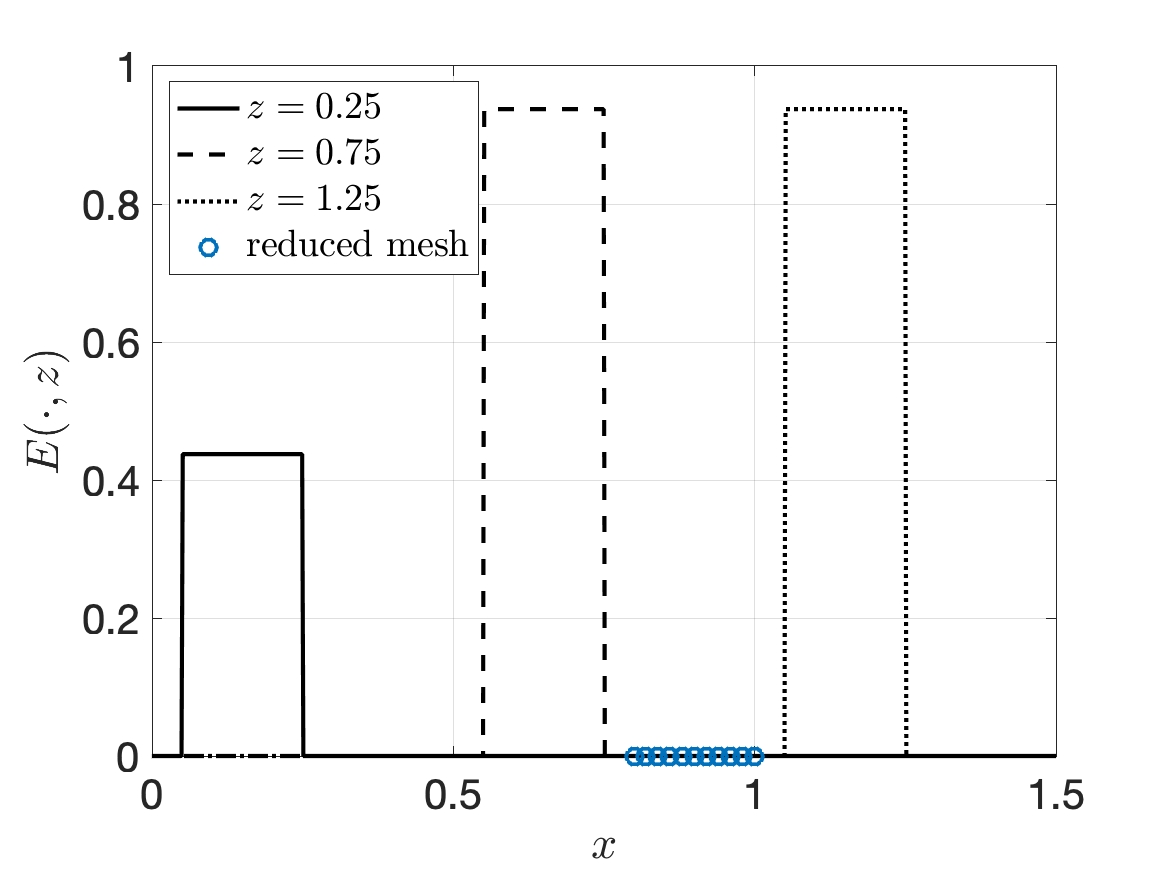}}
\caption{\textit{Results for \Cref{example: colloc}. (a) Snapshots used to compute the reduced mesh. (b) A few snapshots of the residual---different from those used to compute the reduced mesh---and the cell centres of the reduced mesh. The reduced mesh is localized around the support of the snapshot at $z=1$.}}	\label{fig: ex colloc}
\end{figure} 
\end{example}
\subsection{Online-adaptive technique}\label{sec: hyp adp} To account for the transport-type behaviour of the residual, we introduce online-adaptivity in the reduced mesh. This results in an online and an offline stage that computes $\mcal E_{z}$; below is a brief summary. The matrix $\mcal S$ is the snapshot matrix given in \eqref{snap res}. The details of the two stages are given later.
\begin{equation}
\begin{aligned}
\text{Offline Phase}:&\hspB \mcal S\xrightarrow[\text{residuals}]{\text{shift}}\mcal S_{shift}\xrightarrow[\text{reduced mesh}]{\text{select}}\mcal E_{off}\subseteq \{1,\dots,N\}.\\
\text{Online Phase}:&\hspB \mcal E_{off}\xrightarrow[\text{reduced mesh}]{\text{adapt}}\mcal E_{z}.\label{summary hyp}
\end{aligned}
\end{equation}
To select the reduced mesh in the offline phase, we use \Cref{algo: compute colloc}---any other point selection techniques outlined in \cite{KarenColloc,GNAT,Astrid,Nair} will also suffice.
\subsubsection{Offline stage} Similar to the solution, we transform the snapshots of the residuals by evaluating them on a transformed spatial domain---in the language of \cite{RimTR}, this reverses the effect of transport in the residuals. For reasons made clear later in \Cref{sec: shift res}, a transformation based on shifting is sufficient. This provides a snapshot matrix of shifted residuals given as
\begin{equation}
\begin{aligned}
\mcal S_{shift} := \left(\right.&\mcal T[-c_m( t_{2},\bar \mu_1,z_{ref})]\resFV(U_m(t_{2},\bar \mu_{1}),U_m( t_{1},\bar \mu_{1})),\dots,\\
&\left.\mcal T[-c_m(t_{K},\bar \mu_{m_{hyp}},z_{ref})]\resFV(U_m(t_{K},\bar \mu_{m_{hyp}}),U_m(t_{K-1},\bar \mu_{m_{hyp}}))\right).  \label{shift snap res}
\end{aligned}
\end{equation}
Above, $z_{ref}$ could be any of the $N_\mu\times N_t$ parameter samples taken from $\mcal Z$--- refer back to \Cref{sec: sample par} for details. Furthermore, \Cref{algo: shifted snap} provides the shifted residuals, and the shifts $\{c_m(t_{i},\bar \mu_{j},z_{ref})\}_{i,j}$ result from a Lagrange polynomial interpolation over the shift snapshots $\{c(z^{(i)},z^{(j)})\}_{i,j}$.

 After reversing the effect of transport in the residuals, we apply \Cref{algo: compute colloc} to $\mcal S_{shift}$. This results in a reduced mesh with the ids contained in $\mcal E_{off}$. The online phase adapts this reduced mesh. 

\subsubsection{Online stage}\label{sec: adapt mesh} The two steps below constitute the \texttt{adapt_reduced_mesh} routine appearing in the online phase of \eqref{summary hyp}. The rationale behind these two steps is described below. We perform these steps for each of the entries in $\mcal E_{off}$.
\begin{enumerate}
\item Compute the cell center $x_c$ of a cell contained in $\mcal E_{off}$. 
\item Using the \texttt{get_location_id} routine given in \Cref{algo: get location id}, find the cell id containing the shifted point $x_c + c_m(t_{k+1},\mu,z_{ref})$. Include this cell id in $\mcal E_{t_{k+1},\mu}$. 
\end{enumerate}
As noted earlier (in \Cref{sec: comp cost}), the \texttt{get_location_id} routine requires $\mcal O(1)$ operations. Thus, computing the entries of $\mcal E_{t_{k+1},\mu}$ requires $\mcal O(n)$ operations. 

Following is a rationale justifying the above two steps. We express the un-shifted residual in terms of the shifted residual to find
\begin{equation}
\begin{aligned}
\resFV(U_m(t_{k+1},\mu),U_m(t_{k},\mu)) = &\shiftOp{c_m(t_{k+1},\mu,z_{ref})}\\
&\underline{\shiftOp{-c_m(t_{k+1},\mu,z_{ref})}\resFV(U_m(t_{k+1},\mu),U_m(t_{k},\mu))}.
\end{aligned}
\end{equation}
The entries in $\mcal E_{off}$ represent a reduced mesh where the (underlined) shifted residual is the largest in magnitude. Thus, to have a reduced mesh where the un-shifted residual $\resFV(U_m(t_{k+1},\mu),U_m(t_{k},\mu))$ is the largest in magnitude, we shift the centres corresponding to the mesh elements in $\mcal E_{off} $ by $c_m(t_{k+1},\mu,z_{ref})$. 

We revisit \Cref{example: colloc} with our online adaptive technique.
\begin{example}[Revisiting \Cref{example: colloc}]
The numerical parameters remain the same as earlier. In the offline phase, we apply \Cref{algo: compute colloc} to the shifted snapshot matrix $(\mcal S_{shift})_{ij}=E(x_i+\bar z^{(j)},\bar z^{(j)})$. This provides $\mcal E_{off}$. In the online phase, we shift $\mcal E_{off}$ by $z/\Delta x$. The results are shown in \Cref{fig: ex colloc adapt}. The reduced mesh is parameter-dependent and adapts to the residual. As a result, unlike the non-adaptive technique, almost the entire reduced mesh lies inside the support of each residual. In the later numerical experiments, this provides superior accuracy as compared to the non-adaptive technique.

\begin{figure}[ht!]
\centering
\includegraphics[width=2.8in]{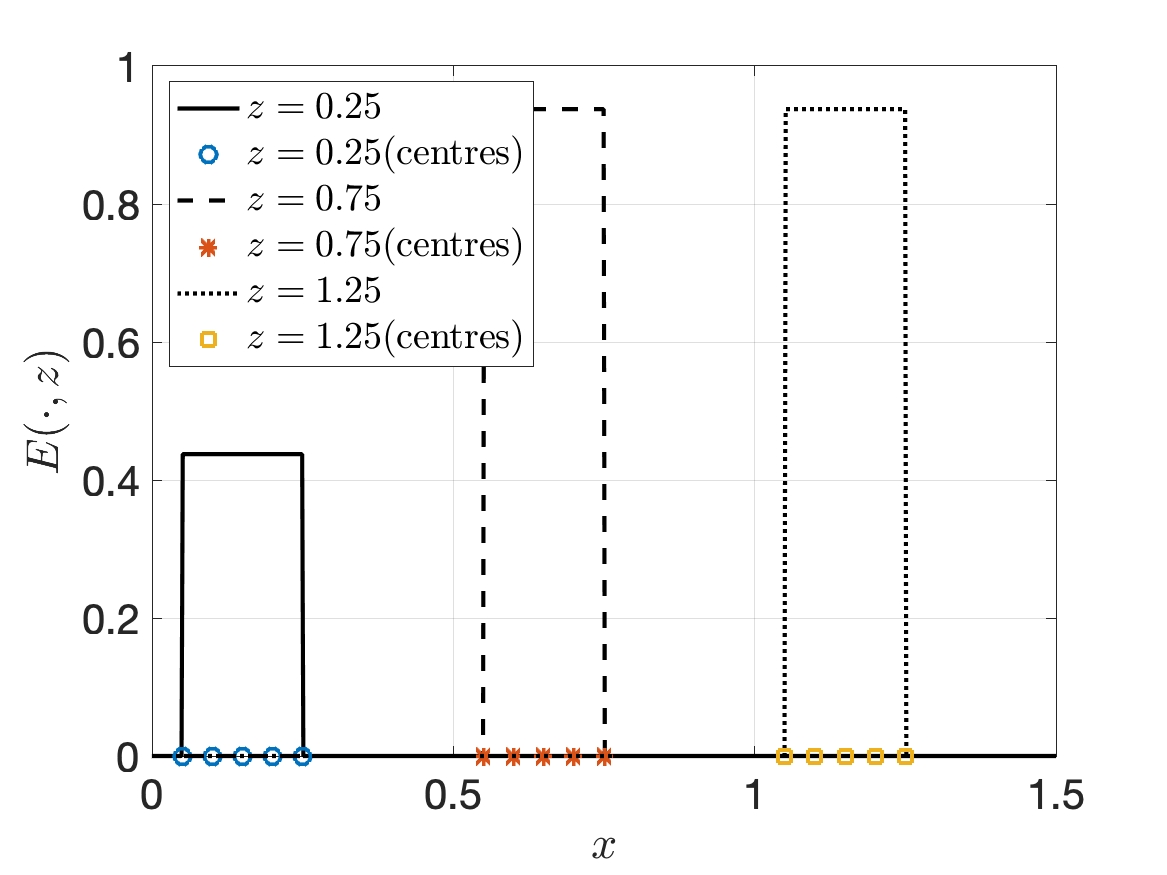}
\caption{\textit{Revisiting \Cref{example: colloc} with the online-adaptive technique. Centres of the online-adapted reduced mesh and the error $E(\cdot,z)$ given in \eqref{error example}.}}	\label{fig: ex colloc adapt}
\end{figure} 
\end{example}

\section{Discussion and extensions}\label{sec: discussion}
\subsection{Why shifting residuals works?}\label{sec: shift res} We explain why a spatial shift can provide a reasonable transformation of the residual or, equivalently, can reverse the effect of transport in the residual. Under some assumptions (listed below in \Cref{lemma: split residual}), we establish that the shifted residual can be given as 
\begin{gather}
\mcal T[-c(t_{k+1},\mu,z_{ref})]\resFV(U_m(t_{k+1},\mu),U_m(t_{k},\mu)) = \resFV^*(t_k,\mu) + \mcal O(\Delta t), \label{break res}
\end{gather} 
where, as $\mu$ and $t_k$ vary over $\mcal P$ and $\{t_j\}_j$, respectively, the function $(t_k,\mu)\mapsto\resFV^*(t_k,\mu)$ has no/minimal transport-type behaviour. The above relation indicates that for a sufficiently small $\Delta t$, shifting should provide an accurate transformation of the residual. 

The following result establishes the above relation, and follows from properties of the shift operator $\mcal T$, the evolution operator $\mcal F$ and some additional assumptions---\Cref{remark: assumptions} further elaborates on the assumptions.
For simplicity and for all $z,\hat z\in\mcal Z$, we assume that the exact shift values $c(z,\hat z)$ are known---similar arguments hold for an accurate approximation $c_m(z,\hat z)$.
\begin{lemma}\label{lemma: split residual}
Assume the following.
\begin{enumerate}
\item[(C1)] $c(z,\hat z)=-c(\hat z,z),\hspB\forall z,\hat z\in\mcal Z$.
\item[(C2)] $c(z,\td z) = c(z,\hat z) + c(\hat z,\td z),\hspB\forall z,\td z,\hat z\in\mcal Z$.
\item[(C3)] $\mcal T[c^*]\mcal F(v) = \mcal F(\mcal T[c^*]v) $. Here, $v\in\mbb R^N$ contains the FV degrees-of-freedom of a function $g:\mbb R^d\to\mbb R$ whose support is contained inside $\Omega$, $c^*$ is a shift such that the support of the shifted function $g(\Theta[c^*])$ is also contained inside $\Omega$, $\mcal F$ is the evolution operator defined in \eqref{FOM}, and $\mcal T[c^*]$ is the shift operator in \Cref{def: shift op}. 
\item[(C4)] $\mcal F\in W^{1,\infty}(\mbb R^N)$. Here, $W^{1,\infty}(\mbb R^N)$ represents a Sobolev space of functions defined over $\mbb R^N$ with bounded weak derivatives upto first-order.
\item[(C5)] Under the CFL-condition \eqref{CFL}, with the initial data in $L^{\infty}$, and with $\mcal E_z=\idFull$, the ROM resulting from the minimization problem \eqref{hyp least-square} is $L^{\infty}$-stable. 
\end{enumerate}
Then, the relation in \eqref{break res} holds with 
\begin{align}
\resFV^*(t_k,\mu) := \td{U}_m(t_{k+1},\mu)-(\opn{Id} + \Delta t\times\mcal F)\td{U}_m(t_{k},\mu),\label{stationary residual}
\end{align}
where $\td {U}_m(z)=\sum_i\alpha_i(z)\mcal T[c(z_{ref},\zRef{i})]U(\zRef{i})$, and $\alpha_i$ is given in \eqref{def alpha}. Furthermore, $z_{ref}$ is the reference parameter appearing in \eqref{snap res}, and $\zRef{i}$ are the vertices of the reference element in \Cref{def: rep el}. 
\end{lemma}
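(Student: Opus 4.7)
The plan is to apply $\mcal T[-c(t_{k+1},\mu,z_{ref})]$ to each of the three summands in $\resFV$ and simplify using the algebraic properties (C1)--(C3), then control the leftover by (C4)--(C5). The starting observation is a rewrite of $U_m(z)$: since $U_m(z)=A(z)\alpha(z)=\sum_i\alpha_i(z)\mcal T[c(z,\zRef{i})]U(\zRef{i})$, and since $c(z,\zRef{i})=c(z,z_{ref})+c(z_{ref},\zRef{i})$ by (C2), the composition property of the shift operator yields the identity
\begin{equation*}
U_m(z)=\mcal T[c(z,z_{ref})]\,\td U_m(z),
\end{equation*}
which is the bridge between the two state representations used in the statement.

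Next, I would expand
\begin{equation*}
\resFV(U_m(t_{k+1},\mu),U_m(t_k,\mu))=U_m(t_{k+1},\mu)-U_m(t_k,\mu)-\Delta t\,\mcal F(U_m(t_k,\mu))
\end{equation*}
and apply $\mcal T[-c(t_{k+1},\mu,z_{ref})]$ term by term. The first term collapses to $\td U_m(t_{k+1},\mu)$ by the composition property and (C1). For the second term, the composition property together with (C1)--(C2) combines the two shifts into
\begin{equation*}
c_*:=c(t_k,\mu,z_{ref})-c(t_{k+1},\mu,z_{ref})=c(t_k,\mu,t_{k+1},\mu),
\end{equation*}
producing $\mcal T[c_*]\td U_m(t_k,\mu)$. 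For the third term, (C3) lets me commute the outer shift past $\mcal F$, and the same reduction yields $\Delta t\,\mcal F(\mcal T[c_*]\td U_m(t_k,\mu))$.

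Subtracting $\resFV^*(t_k,\mu)=\td U_m(t_{k+1},\mu)-\td U_m(t_k,\mu)-\Delta t\,\mcal F(\td U_m(t_k,\mu))$ then leaves the remainder
\begin{equation*}
R=(\opn{Id}-\mcal T[c_*])\td U_m(t_k,\mu)+\Delta t\bigl[\mcal F(\td U_m(t_k,\mu))-\mcal F(\mcal T[c_*]\td U_m(t_k,\mu))\bigr].
\end{equation*}
The second bracket is bounded by the Lipschitz constant of $\mcal F$ (from (C4)) times $\|(\opn{Id}-\mcal T[c_*])\td U_m(t_k,\mu)\|$, so everything reduces to estimating that first quantity. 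Because $c_*$ represents the change in shift between consecutive time steps, mild Lipschitz regularity of $(t,\mu)\mapsto c(t,\mu,z_{ref})$ in time gives $|c_*|=\mcal O(\Delta t)$, and (C5) provides an a priori bound on $\td U_m(t_k,\mu)$ so that shifting by $c_*$ affects values only on a strip of measure proportional to $|c_*|$.

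\paragraph{Main obstacle.} The delicate point is the bound $\|(\opn{Id}-\mcal T[c_*])\td U_m(t_k,\mu)\|=\mcal O(\Delta t)$. Since the solutions of interest are typically discontinuous, this estimate is not available in $l^\infty$; one must work in a weaker norm ($l^1$ or $l^2$) where the implicit constant depends on the total variation of $\td U_m(t_k,\mu)$. Justifying that this total variation is parameter-uniformly bounded is the real content, and it is where (C5) combined with the assumption that snapshots $U(\zRef{i})$ are themselves BV enters. Once this is in hand, the remainder $R$ is $\mcal O(\Delta t)$ in the chosen norm, yielding \eqref{break res}.
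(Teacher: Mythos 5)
Your proposal follows essentially the same route as the paper's proof in the appendix: apply the inverse shift term by term to the residual, collapse the composed shifts via (C1)--(C2) so that the first term becomes $\td U_m(t_{k+1},\mu)$, commute the shift past $\mcal F$ via (C3), and control the flux term with the Lipschitz bound from (C4). The one place you diverge is the point you flag as the ``main obstacle'': the paper does not actually prove $\|(\opn{Id}-\mcal T[c_*])\td U_m(t_k,\mu)\|=\mcal O(\Delta t)$. It simply invokes the $L^\infty$-stability assumption (C5) to write $\mcal T[-c(t_{k+1},\mu,z_{ref})]U_m(t_k,\mu)=\td U_m(t_k,\mu)+\mcal O(1)$, and only the flux term is genuinely reduced to $\mcal O(\Delta t)$, because its $\Delta t$ prefactor combines with (C4) to absorb that $\mcal O(1)$ discrepancy. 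So your concern about needing a BV-type estimate in a weaker norm is legitimate, but it identifies a looseness in the paper's own argument rather than a step the paper carries out and you missed; your outline is, if anything, more demanding at that point than the published proof.
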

\begin{proof}
See \Cref{app: shift snap}.
\end{proof}

We expect $\resFV^*(t_k,\mu)$ to not have a transport dominated behaviour. First, consider $\td U_m(z)$. It is a linear combination of snapshots taken from the transformed solution set $\{\mcal T[c(z_{ref},z)]U(z)\hsp : \hsp z\in\mcal Z\}$. Assuming that shifting provides an accurate snapshot transformation---otherwise, our ROM will be inaccurate---as $z$ varies over $\mcal Z$, $\td {U}_m(z)$ should not exhibit a dominant transport-type behaviour. 

Now, consider the second term on the right in \eqref{stationary residual} where the operator $\opn{Id} + \Delta t\times \mcal F$ acts on $\td U_m(z)$. As discussed above, $\td U_m(z)$ does not exhibit a transport-type behaviour. Furthermore, due to the CFL-condition \eqref{CFL}, this operator can "move" $U_m(z)$ by a maximum of one grid-cell of size $\Delta x$, which, for a sufficiently small $\Delta t$ and owing to the CFL-condition, is not a dominant transport type behaviour. This justifies our choice to transform the residual with a spatial shift. 

\begin{remark}[Comments on the assumptions made in the above result]\label{remark: assumptions}
(C1) This is an anti-symmetry property and follows from the minimization problem given in \eqref{def c}. (C2) This property states that shifting can be performed in steps, which, at least intuitively, seems reasonable \cite{Welper2017,WelperHighRes}. The intuition (at least for the test cases that we consider) is justified by numerical experiments. (C3) For compactly supported solutions, this property holds true if the continuous-in-space analogue of $\mcal F$, i.e., $\nabla \cdot f$ where $f$ is the flux function, commutes with shifting. This commutation property is a weaker version of Galilean invariance that holds true for most hyperbolic equation of practical relevance---examples include the wave equation, Euler equations, Burgers' equation, etc. (C4) For a twice continuously differentiable flux-function and at least for a LLF numerical flux, one can check that this property holds true. (C5) Our assumption states that the ROM inherits the $L^{\infty}$ stability of the FOM. A rigorous stability proof is unavailable, as yet. Nonetheless, at least for our numerical experiments, the assumption is valid.
\end{remark}

\subsection{Extension to a general spatial transform}\label{sec: extend sptransf} We present an extension of our technique to a general spatial transform $\varphi(\cdot,z,\hat z):\Omega\to\Omega$, which might or might not be the same as the shift function $\Theta[c^*(z,\hat z)]$. Such a transform is suitable for problems involving time-dependent boundary conditions, multiple shocks, shock interaction, etc \cite{Welper2017,Nair,RegisterMOR}. 

 Let $\mcal T[\varphi]$ be the same as the shift operator $\mcal T[c^*]$ given in \Cref{def: shift op} but with the shift function $\Theta[c^*]$ replaced by a general $\varphi$. For most physically relevant PDEs, the evolution operator $\mcal F$ does not need to commute with $\mcal T[\varphi]$ i.e., the assumption (C3) in \Cref{lemma: split residual} might not hold true with $\mcal T[c^*]$ replaced by $\mcal T[\varphi]$. As a result, following the previous discussion, transforming the residual with $\varphi$ might be inaccurate---the transformed residual might still have a dominant transport-type behaviour. Therefore, we need to treat the residual and the solution separately, and find a spatial transform for the residual that is different from that for the solution. This is in contrast to shifting where (thanks to the property (C3)) the same spatial transform, i.e. shifting, is used to transform both the solution and the residual.

For $z,\hat z\in\mcal Z$, let $\varphi_R(\cdot,z,\hat z):\Omega\to\Omega$ represent the spatial transform for the residual---we emphasis that $\varphi_R$ does not need to be the same as $\varphi$. Then, similar to $\mcal S_{shift}$ given in \eqref{shift snap res}, we can define the transformed snapshot matrix of residuals as 
\begin{equation}
\begin{aligned}
\mcal S_{\varphi_R} := \left(\right.&\mcal T[\varphi_R(\cdot,z_{ref},t_{2},\bar \mu_1)]\resFV(U_m(t_{2},\bar \mu_{1}),U_m( t_{1},\bar \mu_{1})),\dots,\\
&\left.\mcal T[\varphi_R(\cdot,z_{ref},t_{K},\bar \mu_{m_{hyp}})]\resFV(U_m(t_{K},\bar \mu_{m_{hyp}}),U_m(t_{K-1},\bar \mu_{m_{hyp}}))\right). 
\end{aligned}
\end{equation}
Offline, we apply \Cref{algo: compute colloc} to $\mcal S_{\varphi_R}$ and compute $\mcal E_{off}$. Online, using the routine \texttt{adapt_reduced_mesh} outlined in \Cref{sec: adapt mesh}, we transform the cell centres of mesh elements whose ids are contained in $\mcal E_{off}$ with the transform $\varphi_R(\cdot,z_{ref},t_{k+1},\mu)$. As stated earlier, on a Cartesian mesh, this procedure requires $\mcal O(n)$ operations. Note that, same as for the solution, one can compute $\varphi_R$ using the non-convex optimization techniques outlined in \cite{Welper2017,RegisterMOR,Nair}.

\subsection{Extension to a Structured Auxiliary Mesh (SAM)}\label{sec: extend SAM}
 We study the computational cost of our ROM technique for a SAM. In a SAM, an unstructured mesh sits atop an auxiliary Cartesian mesh. \Cref{fig: SAM} depicts a simple example, further details can be found in \cite{SAM}. We assume that we have $N_{uc}$ number of unstructured cells in each of the $N_{cc}$ number of Cartesian cells. Thus, the total number of cells read $N = N_{uc}\times N_{cc}$. We assume that $N_{uc}$ is independent of $N$. We claim that on a SAM our ROM requires $\mcal O(N_{uc}N_{quad}n)$ operations, where $N_{quad}$ is the number of quadrature points per cell, and $n$ is the size of the reduced mesh given in \eqref{size reduce mesh}. A justification for our claim is as follows.

The key difference between a SAM and a Cartesian mesh is in the cost of the \texttt{get_location_id} routine. Recall that the routine \texttt{get_location_id} (as given in \Cref{algo: get location id}) finds the id of the mesh element that contains a given space point. On a SAM---first performing a search on the auxiliary Cartesian mesh that requires $\mcal O(1)$ operations---we require an additional $\mcal O(N_{uc})$ operations to search in the $N_{uc}$ unstructured cells contained inside a single Cartesian mesh element. Thus, as opposed to $\mcal O(1)$ operations on a Cartesian mesh, \texttt{get_location_id} routine on a SAM requires $\mcal O(N_{uc})$ operations.

On an unstructured grid, we consider an orthogonal projection operator $\Pi$---refer back to \Cref{remark: inclusion} for details. To compute the projection, we consider a numerical quadrature with $N_{quad}$ quadrature points inside every cell. To find the shifted location of each of these quadrature points, we require $\mcal O(N_{uc}N_{quad})$ operations. To perform this computation for $n$ number of mesh elements, we need $\mcal O(N_{uc}N_{quad}n)$ operations. 

Using the same reasoning as above, one can conclude that, for some $v\in \mbb R^N$, computing the operator $\mcal F(v)$ on a reduced mesh of size $n$ requires $\mcal O(N_{uc}n)$ operations---$\mcal O(N_{uc})$ operations provide the neighbour of a single mesh element. Likewise, adapting a reduced mesh with $n$ elements requires $\mcal O(N_{uc}n)$ operations. Furthermore, the cost of solving the least-squares problem with the \texttt{lsqminnorm} routine requires $\mcal O(n)$. 
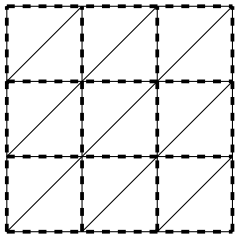
\begin{figure}[ht!]
\centering
\begin{tikzpicture}
\draw[step=1cm] (-1,-1) grid (2,2);
\foreach \i in {0,...,2}
	\foreach \j in {0,...,2}
		{\draw[black] (-1 + \i,1-\j) -- (\i,2-\j);}
\draw[step=1cm,dashed,line width=0.5mm] (-1,-1) grid (2,2);
\end{tikzpicture}
\caption{\textit{A sample SAM. Dashed lines---auxiliary Cartesian mesh. Solid lines---unstructured mesh. }} \label{fig: SAM}
\end{figure}
\section{Numerical Experiments}\label{sec: num exp}
We consider the following test cases. The goal of our numerical experiments is to study the accuracy of the online adaptive reduced mesh technique and compare it to the non-adaptive technique and the FOM.
\begin{enumerate}[label =(\roman*)]
\item \textbf{Test-1 (1D Linear advection) }We consider a linear one-dimensional advection equation with a parameterised advection speed
\begin{linenomath*}
\begin{equation}
\begin{gathered}
\pd_t u(x,t,\mu) + \mu\pd_x u(x,t,\mu) = 0,\hspB \forall (x,t,\mu)\in \Omega\times D\times\mcal P. \label{advection equation}
\end{gathered}
\end{equation}
\end{linenomath*}
We choose $\Omega = [0,3]$, $\parSp= [1,3]$, and $D=[0,0.5]$. The initial data reads
\begin{linenomath*}
\begin{gather}
u_0(x,\mu) = \begin{cases}
\mu,\hspB &x\in [0.5,1]\\
0,\hspB&\text{else}
\end{cases},\hspB\forall \mu\in\mcal P.\label{initial data 1D}
\end{gather}
\end{linenomath*}
\item \textbf{Test-2 (A moving box function)} We construct a reduced approximation to the set $\{u_N(\cdot,z)\hsp :\hsp z\in\mcal Z\}$, where $u_N(\cdot,z)$ is a FV approximation to a function $u(\cdot,z)$ that shifts in $\Omega$ and changes its "shape" with the parameter. For all $(t,\mu)\in D\times\mcal P$, the function $u(\cdot,z)$ is given as
\begin{gather}
u(\cdot,t,\mu) = \begin{cases}
\exp\left(-\mu t\right),\hspB &|x_1-(\mu+t)| \leq 0.3,\hsp |x_2-t| \leq 0.3\\
0,\hspB &\text{else}
\end{cases}.
\end{gather}
We choose $D,\mcal P = [0,1]$, and $\Omega = [-0.5,2.5]^2$. Note that increasing $\mu$ shifts $u(\cdot,t,\mu)$ along the $x_1$-direction, and increasing $t$ shifts $u(\cdot,t,\mu)$ along the vector $(t,t)^T$. To compute the FV approximation $u_N(\cdot,z)$, we project $u(\cdot,z)$ onto the FV approximation space. The details related to the projection and the reduced approximation are discussed later.

\item \textbf{Test-3 (2D Collisionless radiative transport) }We consider the 2D collisionless radiative transport equation given as \cite{Carlos}
\begin{equation}
\begin{aligned}
\pd_t u(x,t,\mu) + &\cos(\mu)\pd_{x_1}u(x,t,\mu)\\
 + &\sin(\mu)\pd_{x_2}u(x,t,\mu) = 0,\hspB \forall (x,t,\mu)\in \Omega\times D\times \mcal P.
 \end{aligned}
\end{equation}
The initial data reads
\begin{gather}
u_0(x,\mu) = \begin{cases}
1,\hspB& \|x\|_2\leq 0.2\\
0,\hspB&\text{else}
\end{cases},\hspB\forall \mu\in\mcal P.
\end{gather}
We set  $\Omega = [-1,1]^2$, $\parSp = [0,2\pi]$, and $D =[0, 0.5]$.
\end{enumerate}
\subsection{Comparison of the different ROMs}
In \Cref{abbrv ROM}, we abbreviate the different ROMs that we compare via numerical experiments. For a $z\in\mcal I^{\mcal Z}$, for the S-ROM, we approximate $u_N(\cdot,z)$ in the space $\td{\mcal X}_m$ given as
\begin{equation}
\begin{aligned}
\td{\mcal X}_m:= \opn{span}\{u_N(\cdot,\zRef{i})\}_{i=1,\dots,4},\label{linear approx}
\end{aligned}
\end{equation}
where $\{\zRef{i}\}_i$ are the parameter samples given in \Cref{def: rep el}. Thus, $\td{\mcal X}_m$ is the same as $\mcal X_m(z)$ but without the spatial transforms. To compute a solution in $\td{\mcal X}_m$, we use the residual minimization technique from \Cref{sec: res min}. We are only interested in the accuracy of the S-ROM and do not equip it with any hyper-reduction technique.

\begin{table}[h!]
\centering
 \begin{tabular}{c | c | c } 
Abbreviation & Approximation space & Hyper-reduction\\
\hline
Adp-SS-ROM & $\mcal X_m(z)$  & online-adaptive (see \Cref{sec: hyp adp})\\
\hline 
N-Adp-SS-ROM &  $\mcal X_m(z)$ & non-adaptive (see \Cref{sec: hyp non-adp})\\
\hline 
SS-ROM & $\mcal X_m(z)$ & none\\
\hline
S-ROM & $\td{\mcal X}_m$ & none\\
 \end{tabular}
 \caption{\textit{Abbreviations for the different ROMs compared via numerical experiments. See \eqref{def Xm c} and \eqref{linear approx} for a definition of $\mcal X_m(z)$ and $\td{\mcal X}_m$, respectively. The abbreviations SS and S stand for shifted snapshot and snapshot, respectively. }} \label{abbrv ROM}
\end{table}

\begin{remark}[Accuracy of the S-ROM]
The approximation space $\td{\mcal X}_m$ satisfies
$$
\td{\mcal X}_m\subseteq \underline{\opn{span}\{u_N(\cdot,z^{(i)})\}_{i=1,\dots,N_\mu\times N_t}},
$$
where the underlined space is a span of all snapshots and is a standard linear reduced basis space for the solution set $\{u_N(\cdot,z)\hsp :\hsp z\in \mcal Z\}$. Since the Kolmogorov $m$-width of this solution set (usually) decays slowly, for a sufficiently small $N_\mu$ and $N_t$, we expect an approximation in the underlined space---and because of the above inclusion, also in $\td{\mcal X}_m$---to be inaccurate. Numerical experiments will corroborate our claim. 
\end{remark}

\subsection{Error quantification} Recall that $N_\mu$ and $N_t$ represent the number of parameter samples along the domains $\mcal P$ and $D$, respectively---see \Cref{sec: sample par} for details. We quantify the error in our ROM via the relative error
\begin{gather}
E(N_t,N_\mu) := \|e\|_{L^\infty(\mcal Z)}\hspB\text{where}\hspB e(z):=\frac{\|u_N(\cdot,z)-u_m(\cdot,z)\|_{L^2(\Omega)}}{\|u_N(\cdot,z)\|_{L^2(\Omega)}}. \label{error ROM}
\end{gather}
The reduced solution $u_m$ can result from either of the ROMs listed in \Cref{abbrv ROM}. We approximate the $L^{\infty}(\mcal Z)$ norm via
\begin{gather}
\|e\|_{L^{\infty}(\mcal Z)} \approx \max_{z\in\mcal Z_{target}}|e(z)|,
\end{gather}
where $\mcal Z_{target}\subset\mcal Z$ is a sufficiently dense, problem dependent and finite set of target parameters given later.
 
\begin{remark}[Software and hardware details]
All the simulations are run using \texttt{matlab}, in serial, and on a computer with two Intel Xeon Silver 4110 processors, 16 cores each and $92$GB of RAM. 
\end{remark}

\subsection{Test-1} We choose a constant time-step of $\Delta t = 1/N_x$, which satisfies the CFL-condition \eqref{CFL}. We choose $N_\mu,N_t=2$ and $N_x=10^3$. Furthermore, as a set of target parameters, we choose
$\mcal Z_{target}=\{(t_i,\td \mu_j)\}_{i,j},$ where $t_j$ are the time-instances at which we compute the ROM, and $\{\td \mu_j\}_j$ are $40$ different parameter samples uniformly placed samples inside $\mcal P$. For the offline phase of the hyper-reduction, we consider five uniformly placed samples inside $\mcal P$ i.e., $m_{hyp}=5$ in \eqref{snap res}. 
\subsubsection{Error comparison}
For the different ROMs outlined in \Cref{abbrv ROM}, \Cref{test-1: err} compares the error $E(N_\mu,N_t)$. The size of the reduced mesh is $n=N_x\times 5\times 10^{-3}$, which is $0.5\%$ of the total mesh size. A few observations are in order. Firstly, with a relative error of $1.07$, the S-ROM performs poorly. It results in an error that is almost five and ten times larger than that resulting from the Adp-SS-ROM and the SS-ROM, respectively. Secondly, the error resulting from the Adp-SS-ROM is twice of that resulting from the SS-ROM. Given the speed-up offered by the Adp-SS-ROM (see the results below), we insist that this loss in accuracy is reasonable. Lastly, the N-Adp-SS-ROM showed large oscillations and appeared to be unstable, which resulted in extremely large error values. Increasing the size of the reduced mesh (as discussed next) makes N-Adp-SS-ROM stable and provides acceptable accuracy. 

The precise reason behind the instability of the N-Adp-SS-ROM is unclear, as yet. Intuitively, we expect that as the residual "moves" along the spatial domain, a fixed reduced mesh is unable to capture the "significant" part of the residual, resulting in instabilities. To further elaborate on this point, for $n=5$, in \Cref{test-1: residual} we plot two snapshots of the residual and the cell centres of the reduced mesh. Similar to \Cref{example: colloc}, for the residual at $z =(0.5,1.5)$, none of the reduced mesh elements lie inside the support of the residual. In contrast, the adaptive approach accurately tracks the "movement" of the residual and appropriately places the reduced mesh.  

\begin{table}[h!]
\centering
 \begin{tabular}{c| c | c | c | c } 
& N-Adp-SS-ROM & Adp-SS-ROM & SS-ROM & S-ROM\\
\hline
$E(N_\mu,N_t)$ & $1.95\times 10^{30}$ & $0.22$ & $0.11$ & $1.07$
 \end{tabular}
 \caption{\textit{Results for test case-1. Error comparison between the different ROMs listed in \Cref{abbrv ROM}. Computations performed with $N_\mu,N_t=2$, $N_x=10^3$, and $n = N_x\times 5\times 10^{-3}$. The N-Adp-SS-ROM showed large oscillations and appeared to be unstable, hence the extremely large error values.}} \label{test-1: err}
\end{table}

\begin{figure}[ht!]
\centering
\includegraphics[width=3in]{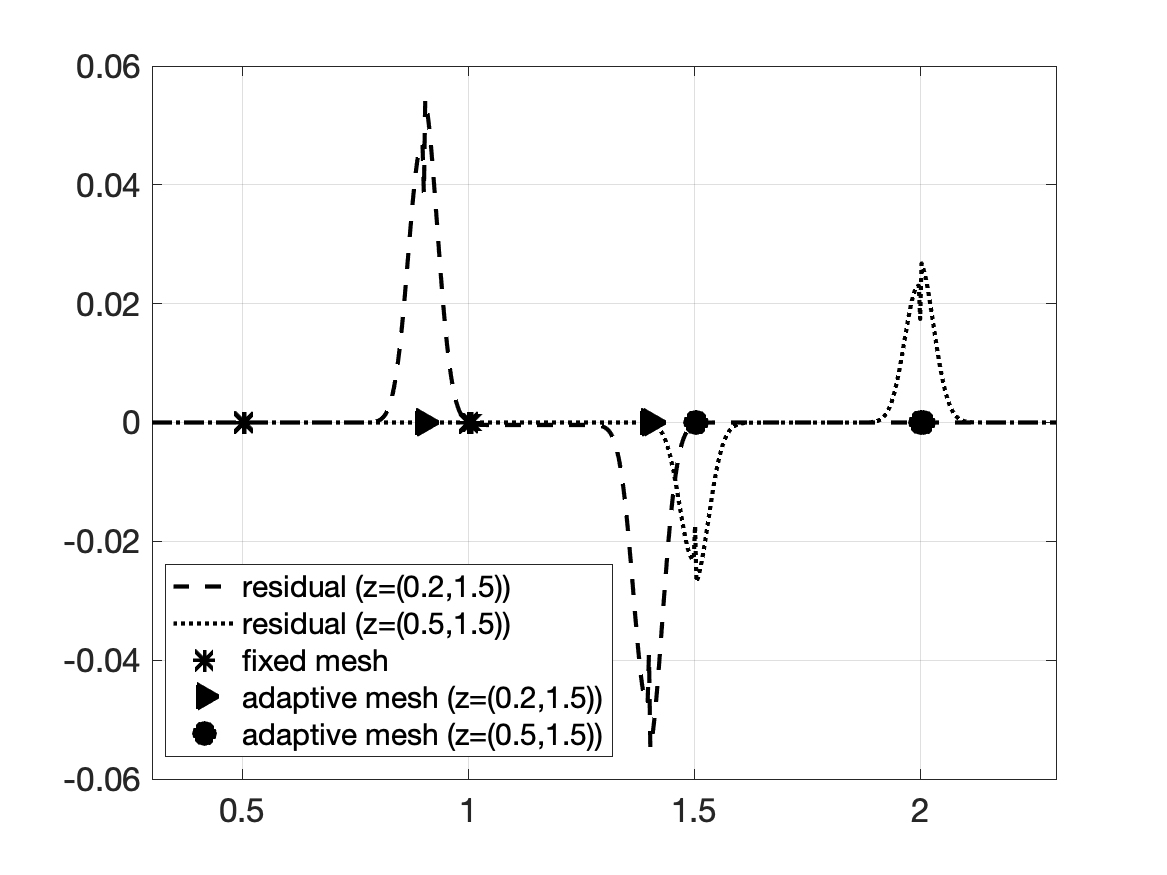} 
\caption{\textit{Results for test case-1. Snapshots of the residual and the centres of the reduced mesh. The size of the reduced mesh is $n=5$. }}\label{test-1: residual}
\end{figure} 

\subsubsection{$E(N_\mu,N_t)$ versus $n$}
For the different sizes of the reduced mesh, we compare the error $E(N_\mu,N_t)$ resulting from the Adp-SS-ROM and the N-Adp-SS-ROM. 
We consider five different values of the reduced mesh size, $n\in\{100,200,400,800\}$. These values correspond to $10\%$, $20\%$, $40\%$ and $80\%$ of the mesh size, respectively. Note that, as the previous study demonstrates, already for $n\ll 100$, the Adp-SS-ROM provides acceptable results. However, for all $n\leq 400$, the N-Adp-SS-ROM showed instabilities, which is the reason behind considering only large values of $n$.

\Cref{test-1: convg n} compares the error values. We make the following observations. (i) Increasing $n$ reduces the error for both the ROMs, which is desirable. Since the accuracy of our ROM is limited by our choice of $N_t$ and $N_\mu$, and increasing $n$ can only offer so much accuracy, the error from Adp-SS-ROM stagnates after a value of $0.11$. (ii) For all $n\leq 400$, the N-Adp-SS-ROM led to strong oscillations and appeared to be unstable, which resulted in extremely large error values. Increasing $n$ to $800$ removes these instabilities and provides reasonable error values. (iii) For all the listed values of $n$, the Adp-SS-ROM remains stable and provides error values close to those reported for the SS-ROM in \Cref{test-1: err}. Note that the Adp-SS-ROM resulted in a larger error value in the previous study because the value of $n$ was smaller. 

\begin{table}[h!]
\centering
 \begin{tabular}{c | c | c } 
 & \multicolumn{2}{c}{$E(N_\mu,N_t)$}\\
 \hline
$n$ $(\% N_x)$ & Adp-SS-ROM & N-Adp-SS-ROM\\
\hline
$100$ (10) & $0.13$ & \underline{$5.6\times 10^{27}$}\\
\hline 
$200$ (20) & $0.11$ & \underline{$1.1\times 10^{26}$}\\
\hline 
$400$ (40)  & $0.11$ & \underline{$1.01\times 10^{13}$}\\
\hline 
$800$ (80) & $0.11$ & $0.11$\\
 \end{tabular}
 \caption{\textit{Results for test case-1. Error versus the size of the reduced mesh $n$. For the underlined values, the N-Adp-SS-ROM showed large oscillations and appeared to be unstable, hence the large error values.  }} \label{test-1: convg n}
\end{table}

\subsubsection{Runtime versus the error} We consider the average runtime given as
\begin{gather}
\mcal C:=\sum_{z\in\mcal Z_{\opn{target}}} \mcal C_{z}/(\#\mcal Z_{\opn{target}}), \label{def runtime}
\end{gather}
where $\mcal C_{z}$ represents the cpu-time (measured with the \texttt{tic-toc} function of \texttt{matlab}) required by the online stage of the ROM (or by the FOM) to compute the solution at the parameter $z$. 

\Cref{test-1: runtime} plots the runtime $\mcal C$ and the speed-up against the error $E(N_\mu,N_t)$. We make the following observations. (i) Although not monotonically, the error converges with $n$. The non-monotonic convergence of the error can be an artefact of the point selection algorithm given in \Cref{algo: compute colloc}. Additional numerical experiments that compare the different point selection algorithms are required to corroborate our claim. (ii) Increasing $n$ increases the runtime. This is consistent with the fact that the cost of the Adp-SS-ROM scales with $n$. (iii) At worst, for $n=320$, the Adp-SS-ROM is $1.8$ times faster than the SS-ROM, and at best, for $n=5$, it is five times faster than the SS-ROM. (iv) The problem is one-dimensional therefore, the explicit time-stepping based FOM is already very efficient. As a result, none of the ROMs provide any speed-up. We refer to test case-3 for a 2D problem where, as compared to the FOM, our hyper-reduction technique offers a significant speed-up. 

\begin{figure}[ht!]
\centering
\subfloat[Error vs runtime]{\includegraphics[width=2.6in]{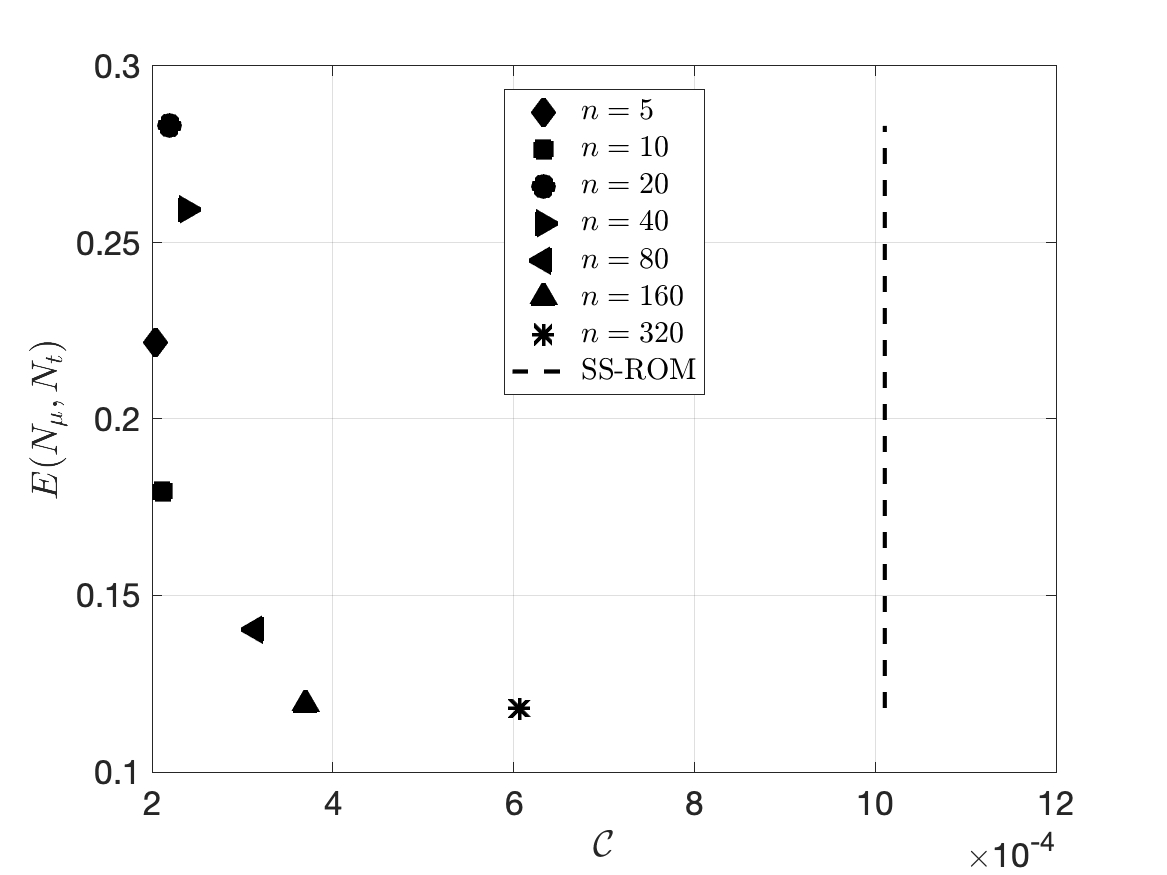}}
\subfloat[Error vs speedup w.r.t the SS-ROM]{\includegraphics[width=2.6in]{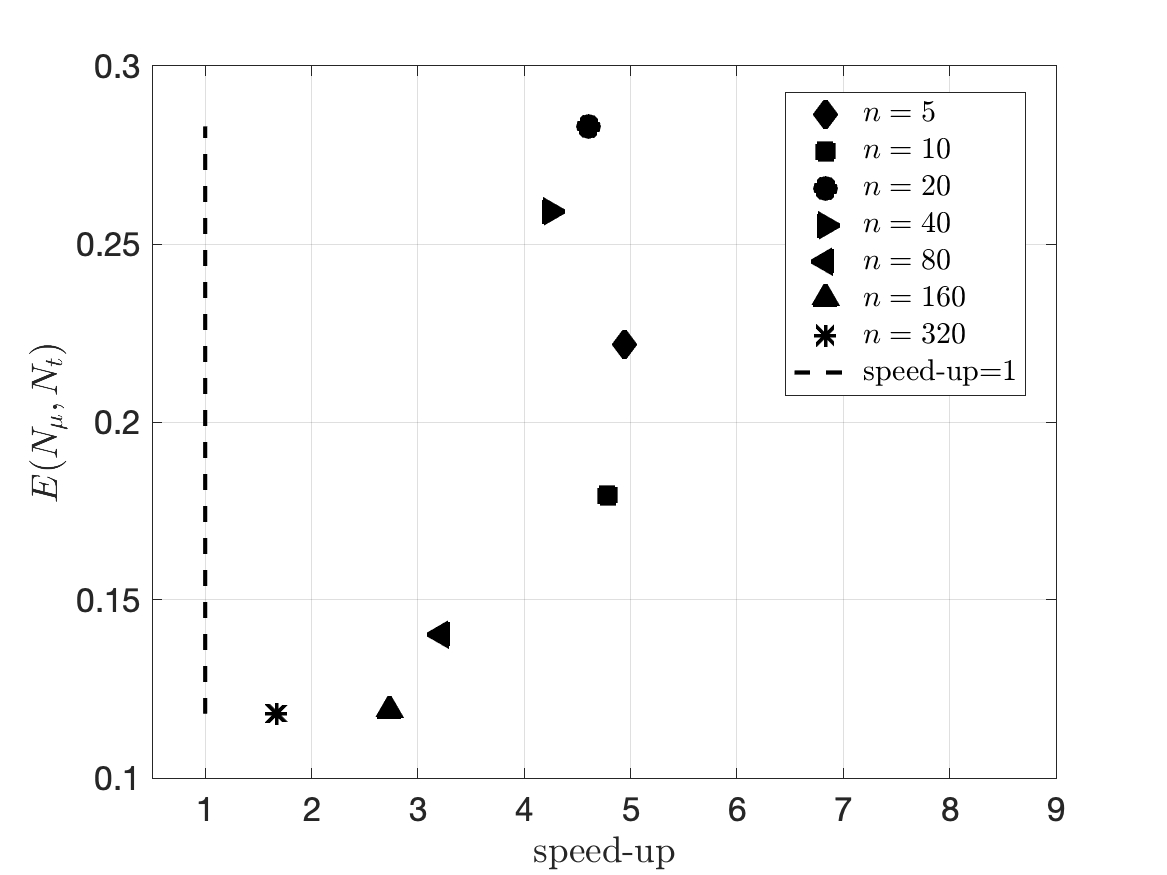}}
\caption{\textit{Results for test case-1 computed with the Adp-SS-ROM. See \eqref{error ROM} and \eqref{def runtime} for a definition of the error $E(N_\mu,N_t)$ and the runtime $\mcal C$, respectively. The dashed line represents (a) the time taken by the SS-ROM, and (b) a speed-up of one.}}\label{test-1: runtime}
\end{figure} 

\subsection{Test-2} We set $N_t=N_\mu = 3$, and $N_x=600$. To compute the SS-ROM, we consider the minimization problem
\begin{gather}
\alpha(z) = \argmin_{y\in\mbb R^4}\|A[\mcal E_z](z)y-b[\mcal E_z](z)\|_{l^2}.\label{ls test2}
\end{gather}
Here, $\mcal E_z$ and $A[\mcal E_z](z)$ are as defined earlier, and $b[\mcal E_z](z)$ is a sub-vector of the vector $b(z)=U(z)$. To compute $U(z)$, we project $u(\cdot,z)$ onto the FV approximation space. To perform the projection, we consider tensorized $5\times 5$ Gauss-Legendre quadrature points in each mesh element. The reduced approximation is given by $U_m =A(z)\alpha(z) $. Note that for the current test case, computing the FOM is equivalent to projecting the exact solution onto the FV approximation space. To collect snapshots of the residual, in \eqref{snap res}, we set $m_{hyp} = 4$. We choose $\mcal Z_{\opn{target}}$ as $100\times 100$ uniformly placed and tensorised points inside $\mcal Z$.

\begin{remark}[No time-stepping]\label{remark: no time-step}
The above minimization problem does not involve a time-stepping scheme. This allows us to study the errors resulting from the reduced approximation, residual minimization and hyper-reduction without the errors introduced from the time-stepping scheme.
\end{remark}

\subsubsection{Error comparison} \Cref{test-2: err} presents the error values resulting from the different ROMs listed in \Cref{abbrv ROM}. As the size of the reduced mesh, we choose $n=N_x^2\times 10^{-2}$, which is $1\%$ of the total mesh size. Both the Adp-SS-ROM and the SS-ROM outperform the S-ROM. The error values resulting from the S-ROM are almost $4.5$ times of those resulting from the Adp-SS-ROM. At least for the present test case and our choice of $n$, our online adaptive hyper-reduction technique introduces almost no error in the SS-ROM.

Unlike the previous test case, the N-Adp-SS-ROM did not exhibit large oscillations, instabilities or extremely large error values. The reason being that the minimization problem in \eqref{ls test2} does not involve a time-stepping scheme---see \Cref{remark: no time-step} above. This prevents error accumulation over time, which, along with a poor placement of the reduced mesh, was one of the reasons why the N-Adp-SS-ROM was unstable in the previous study. For the N-Adp-SS-ROM, there exist target parameters without a single reduced mesh element lying inside the support of the residual. As a result, the solution to the minimization problem \eqref{ls test2} is zero, leading to a relative error of one.

\begin{table}[h!]
\centering
 \begin{tabular}{c| c | c | c | c } 
& N-Adp-SS-ROM & Adp-SS-ROM & SS-ROM & S-ROM\\
\hline
$E(N_\mu,N_t)$ & $1$ & $0.19$ & $0.18$ & $0.84$
 \end{tabular}
 \caption{\textit{Results for test case-2. Computations performed with $N_\mu,N_t=3$, $N_x=600$, and $n = N_x^2\times 10^{-2}$. Error comparison between the different ROMs listed in \Cref{abbrv ROM}. }} \label{test-2: err}
\end{table}
\subsubsection{$E(N_\mu,N_t)$ versus $n$}
For different values of $n$, \Cref{test-2: cong n} compares the error $E(N_\mu,N_t)$ resulting from the N-Adp-SS-ROM and the Adp-SS-ROM. Our observations remain similar to the previous test case. The N-Adp-SS-ROM requires a large reduced mesh to achieve an acceptable accuracy---at least $n=5.76\times 10^4$ reduced mesh elements, which is $16\%$ of the total mesh size, are required to achieve an error of $0.19$. In contrast, the Adp-SS-ROM provides similar accuracy with a reduced mesh that is just $0.5\%$ of the total mesh size. As before, our choice of $N_\mu$ and $N_t$ limit the accuracy, resulting in error stagnation.

\begin{figure}[ht!]
\centering
\includegraphics[width=2.8in]{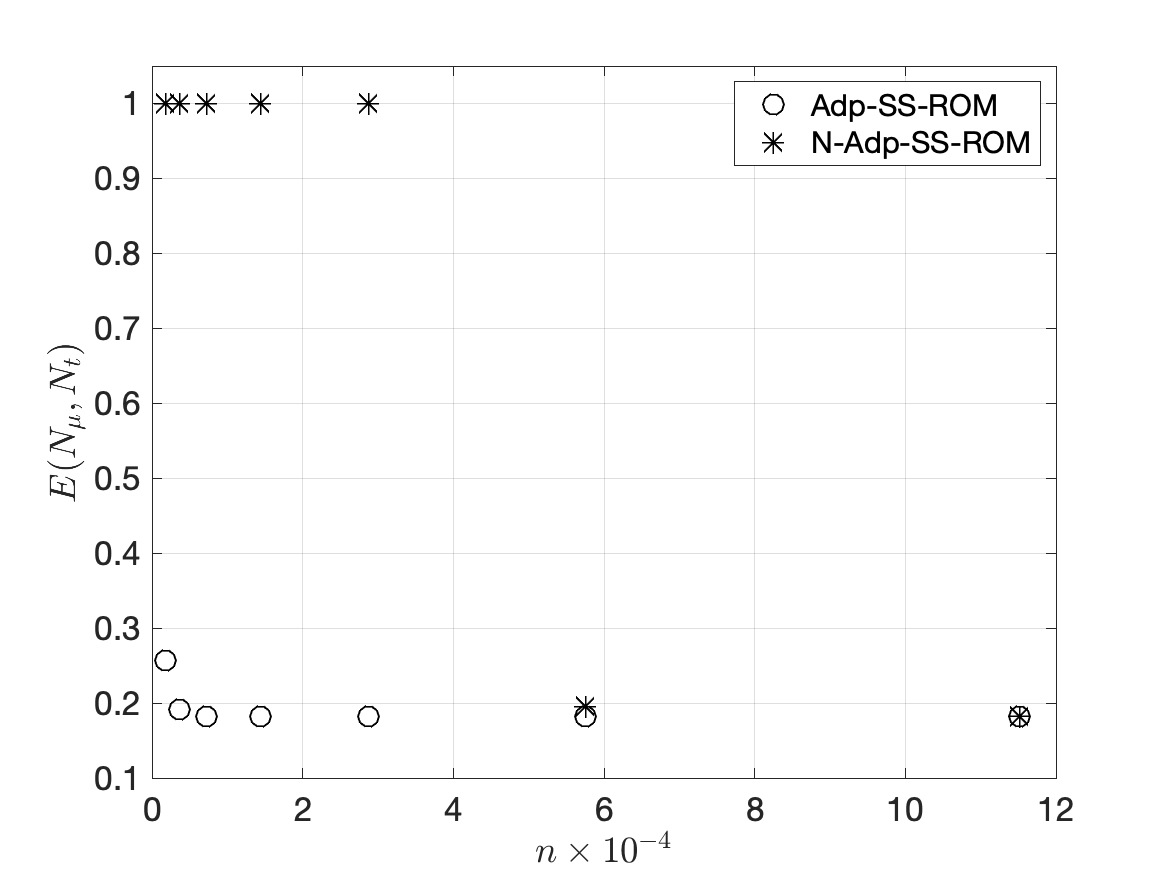}
\caption{\textit{Results for test case-2. Error versus the size of the reduced mesh $n$. Computations performed with $N_t=N_\mu = 3$.}}\label{test-2: cong n}
\end{figure} 

\subsubsection{Runtime versus error} Consider the average runtime $\mcal C$ defined in \eqref{def runtime}. For the Adp-SS-ROM, \Cref{test-2: runtime} plots the runtime and the speed-up against the error. A few observations follow. (i) The error decreases monotonically upon increasing $n$, which is desirable. (ii) At best, for $n=1.8\times 10^3$, the Adp-SS-ROM is $30$ times faster and $1.3$ times worse in accuracy than the SS-ROM. (iii) At worst, for $n=115.2\times 10^3$, the Adp-SS-ROM is almost $8.5$ times faster (and similar in accuracy) than the SS-ROM. (iv) Computing the FOM involves projecting a function onto the FV approximation space, which is a cheap operation. Therefore, none of the ROMs offer any speed-up. We refer to the following test case that considers a more realistic scenario and presents the speed-up offered by our hyper-reduction technique. 

\begin{figure}[ht!]
\centering
\subfloat[Error versus runtime]{\includegraphics[width=2.3in]{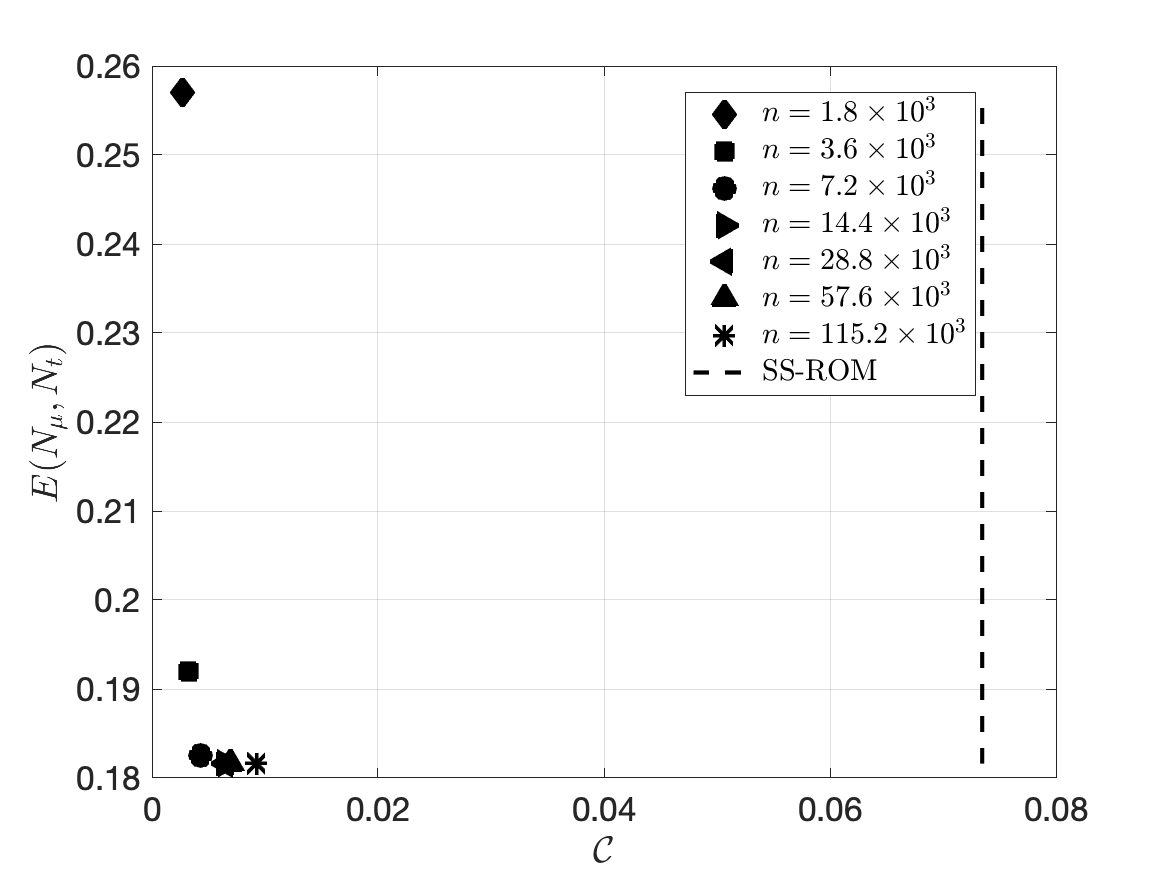}}
\hfill
\subfloat[Error versus speed-up w.r.t the SS-ROM]{\includegraphics[width=2.3in]{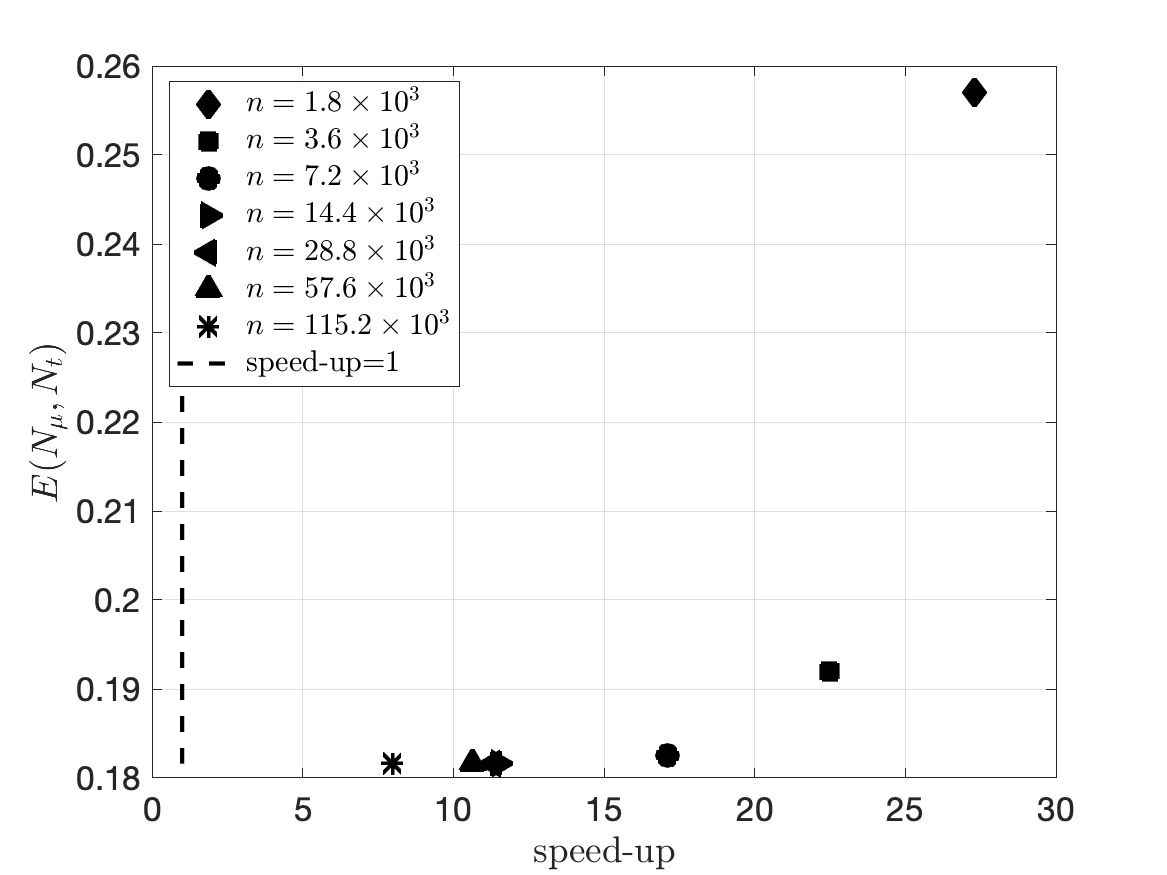}}
\caption{\textit{Results for test case-2, computed with the Adp-SS-ROM. The dashed line represents (a) the time taken by the SS-ROM, and (b) a speed-up of one.}}\label{test-2: runtime}
\end{figure} 

\subsection{Test-3 } We discretize $\Omega$ with a $N_x\times N_x$ Cartesian grid and we choose $N_x=800$. We use a constant time step of $\Delta t = \Delta x/2$. We set $N_\mu=N_t = 6$. To collect snapshots of the residual, we take $5$ uniformly placed samples from $\mcal P$ i.e., $m_{hyp}=5$ in \eqref{snap res}.  We study the ROM at the target parameters $\mcal Z_{target}= \{(t_i,\td \mu_j)\}_{i,j}.$
 Here, $\{t_j\}_j$ represent the time-instances at which we compute the ROM, and $\{\td\mu_i\}_i$ are $50$ uniformly placed samples inside $\mcal P$.

\subsubsection{Error comparison} We choose a reduced mesh that contains $2\%$ of the total mesh elements. \Cref{test-3: err} shows the error $E(N_\mu,N_t)$ for the different ROMs. We make the following observations. (i) Both the Adp-SS-ROM and the SS-ROM outperform the S-ROM. (ii) The maximum error resulting from the Adp-SS-ROM is almost $1.3$ times of that resulting from the SS-ROM. Given that Adp-SS-ROM is 50 times more efficient than the SS-ROM---see the discussion below---we insist that the loss in accuracy introduced via hyper-reduction is acceptable. (iii) The N-Adp-SS-ROM shows large oscillations resulting in large error values. Nonetheless, same as earlier, increasing the size of the reduced mesh removes these instabilities and provides an acceptable accuracy.

\begin{table}[h!]
\centering
 \begin{tabular}{c| c | c | c | c } 
& N-Adp-SS-ROM & Adp-SS-ROM & SS-ROM & S-ROM\\
\hline
$E(N_\mu,N_t)$ & $18.75\times 10^{3}$ & $0.29$ & $0.21$ & $1.06$
 \end{tabular}
 \caption{\textit{Results for test case-3. Computations performed with $N_\mu,N_t=6$, $N_x=800$, and $n = N_x\times 2\times 10^{-2}$. Error comparison between the different ROMs listed in \Cref{abbrv ROM}. The N-Adp-SS-ROM showed large oscillations and appeared to be unstable, hence the extremely large error values.}} \label{test-3: err}
\end{table}

\subsubsection{Runtime versus the error}
For the Adp-SS-ROM, \Cref{test-3: runtime vs err} plots the average runtime and the speed-up versus the error $E(N_\mu,N_t)$. We make the following observations. (i) Increasing $n$ increases the runtime and decreases the speed-up, which is as expected. Beyond $n=25.6\times 10^3$, as compared to the FOM, the Adp-SS-ROM does not offer any speed-up. (ii) The lowest runtime and the maximum speed-up of $7.8$ corresponds to a reduced mesh that contains $0.5\%$ of the total mesh elements. The relative error is $0.32$, which is one-third of that resulting from the S-ROM and is $1.5$ times of that resulting from the SS-ROM---see \Cref{test-3: err}. The accuracy loss as compared to the SS-ROM is acceptable given that the Adp-SS-ROM offers a speed-up of two orders-of-magnitude.

\begin{figure}[ht!]
\centering
\subfloat [Error vs runtime]{
\includegraphics[width=2.3in]{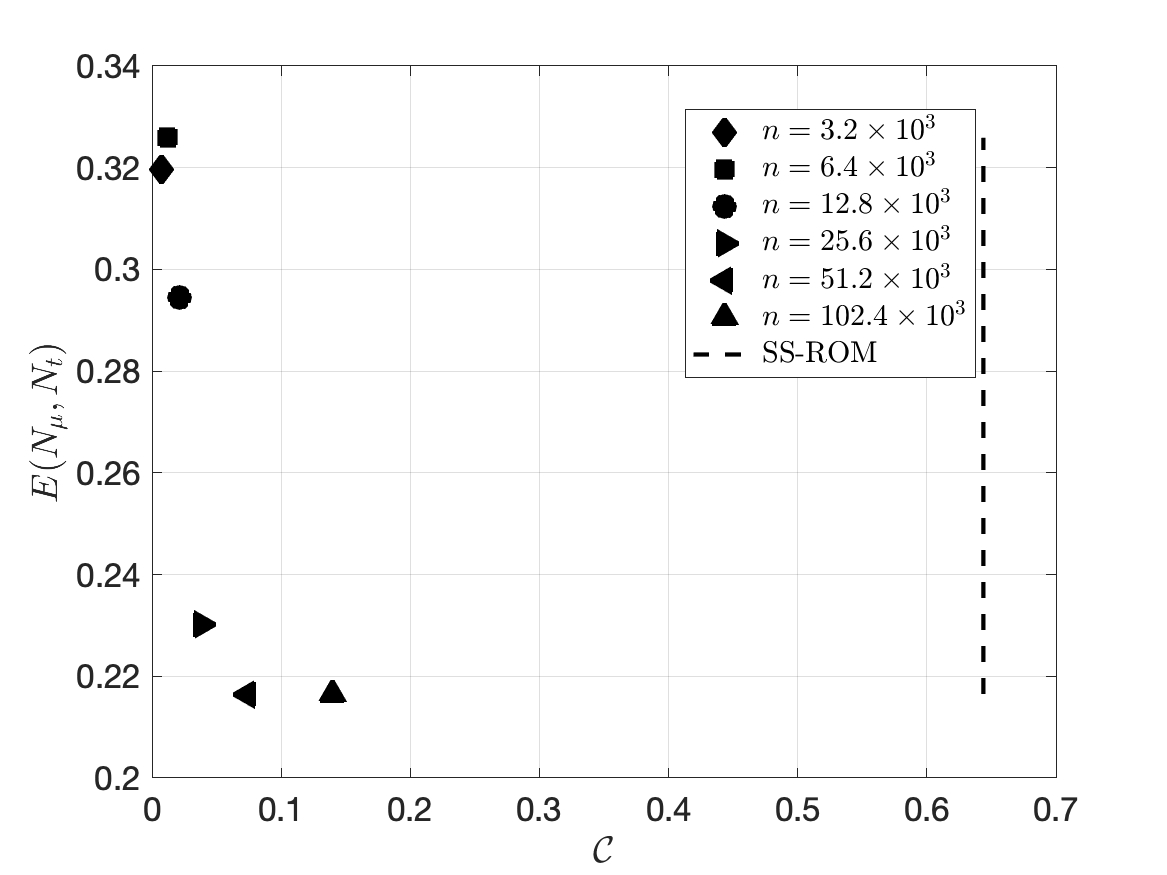} 
}
\hfill
\subfloat [Error vs speed-up w.r.t to the FOM]{
\includegraphics[width=2.3in]{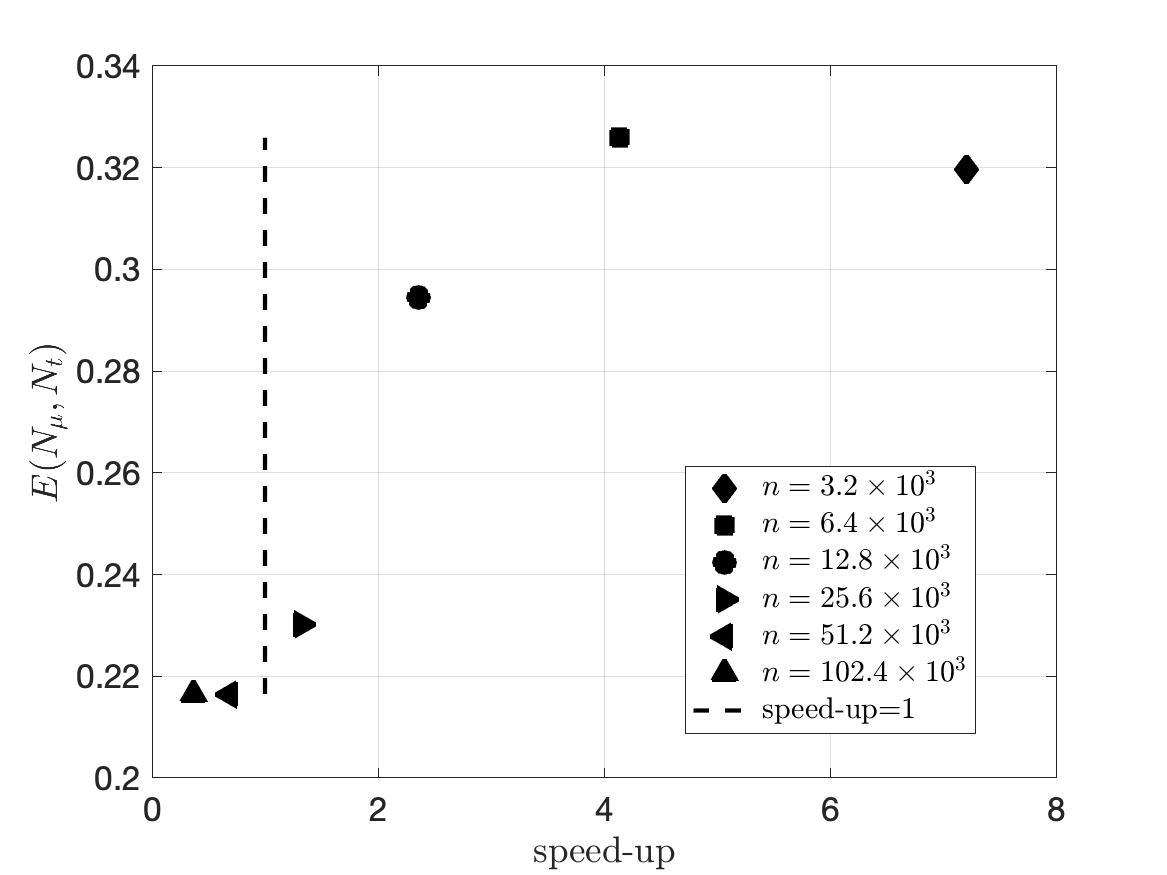} 
}
\hfill
\subfloat [Error vs speed-up w.r.t the SS-ROM]{
\includegraphics[width=2.3in]{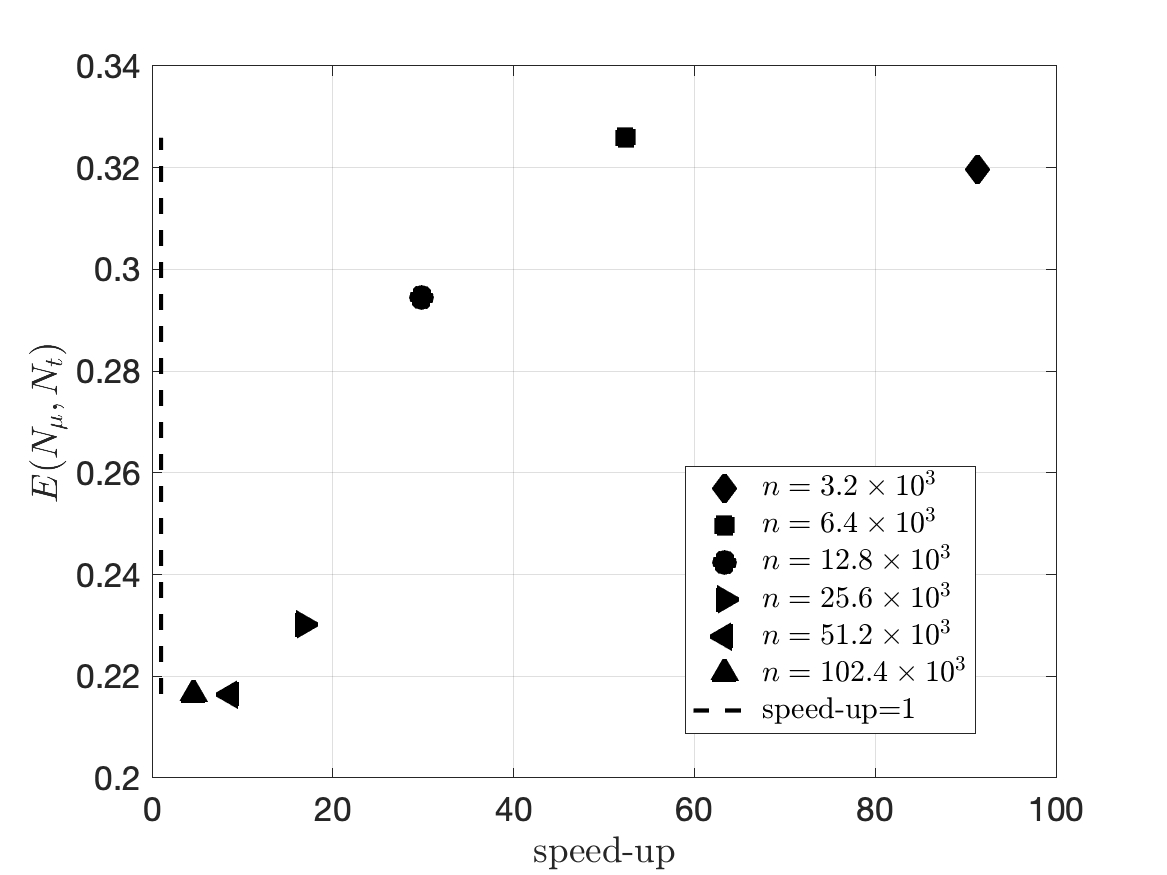} 
}
\caption{\textit{Results for test case-3, computed with the Adp-SS-ROM. Computations performed with $N_\mu = N_t = 6$, and $N_x = 800$. See \eqref{error ROM} and \eqref{def runtime} for a definition of $\mcal E(N_\mu,N_t)$ and $\mcal C$, respectively. The dotted line indicates (a) the runtime of the SS-ROM, (b) and (c) speed-up of one. }}\label{test-3: runtime vs err}
\end{figure} 

\subsubsection{Runtime split}
We split the runtime $\mcal C$ into four major parts. (i) $\mcal C_{adapt}$, the average runtime to adapt the reduced mesh, (ii) $\mcal C_A$, the average runtime to compute the matrix $A(t_{k+1},\mu)$ given in \eqref{def A} (or $A[\mcal E_{t_{k+1},\mu}(t_{k+1},\mu)]$ in the case of hyper-reduction). (iii) $\mcal C_{b}$, the average runtime to compute the vector $b(t_k,\mu)$ given in \eqref{def b}. (iv) $\mcal C_{ls}$, the average runtime to solve the least-squares problem in \eqref{least-square}. For $N_x=800$ and $n=N_x^2\times 2\times 10^{-2}$, \Cref{test-3: split run} compares the different runtime for the Adp-SS-ROM. By far, computing the vector $b(t_k,\mu)$ is the most expensive part of the algorithm---it takes almost $70\%$ of the total runtime. It is noteworthy that the combined cost of solving the least-squares problem and adapting the reduced mesh is less than $10\%$ of the total runtime. Although not shown in the plot, increasing $N_x$ has almost no effect on the runtime. 

\begin{figure}[ht!]
\centering
\includegraphics[width=2.3in]{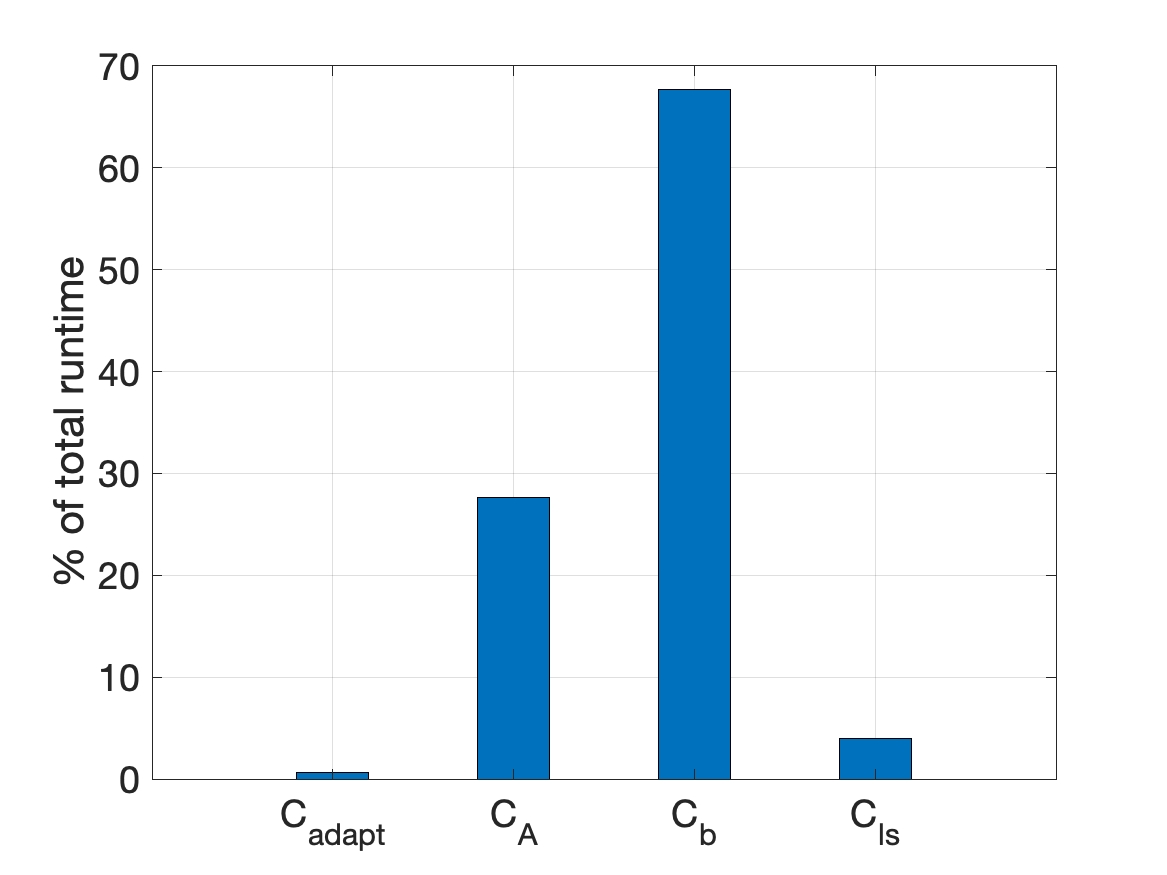} 
\caption{\textit{Results for test case-3. Runtime split for the online-adaptive hyper-reduced ROM. Computations performed with $N_t = N_\mu = 6$.}}	\label{test-3: split run}
\end{figure} 

\subsubsection{Visualization of the reduced mesh}
For $N_x=500$ and $n = N_x^2\times 5\times 10^{-3}$, \Cref{test-3: reduced mesh} presents the centres of the reduced mesh computed using the adaptive and the non-adaptive technique. The reduced mesh resulting from the non-adaptive technique is fixed in the parameter space. Most of its elements are centred around the origin and none lie in the "significant/non-zero" part of the residual. Apparently, this---as noted earlier---results in an unstable ROM. In contrast, by tracking the "movement" of the residual, the adaptive technique changes the reduced mesh with the parameter and places it where the residual is large/non-zero.

\begin{figure}[ht!]
\centering
\subfloat [Cell centres of the non-adaptive reduced mesh.]{
\includegraphics[width=2.3in]{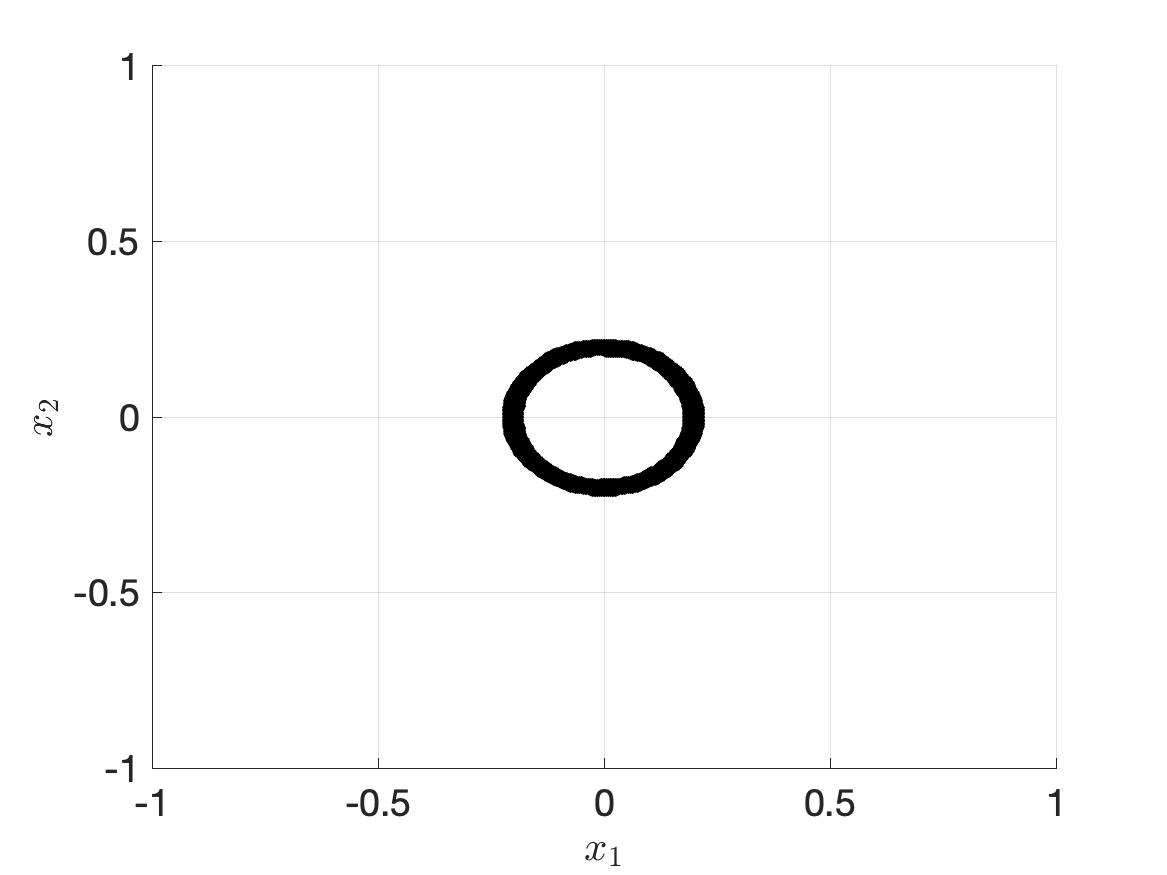} 
}
\hfill
\subfloat [Cell centres of the adaptive reduced mesh for $z = (0.5,4.35)$.]{
\includegraphics[width=2.3in]{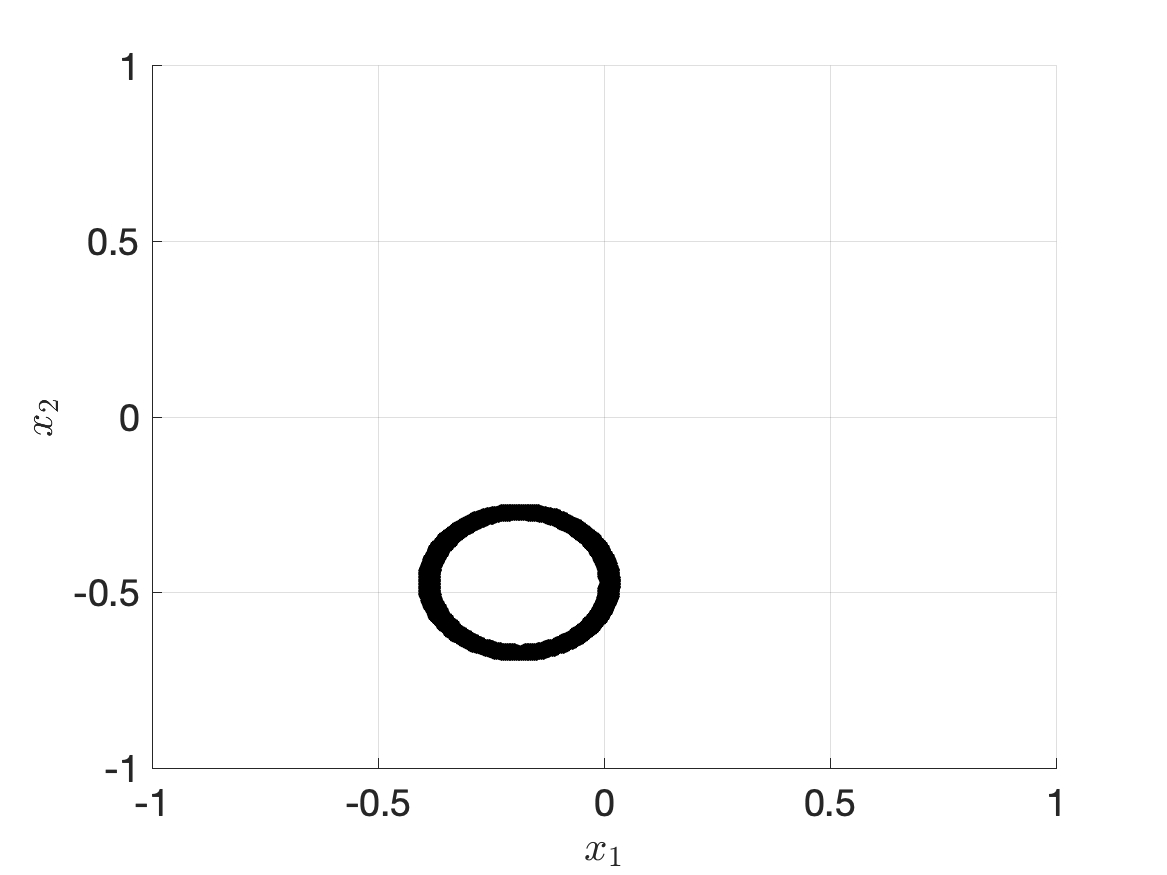}}
\hfill
\subfloat [Residual for $z = (0.5,4.35)$.]{
\includegraphics[width=2.3in]{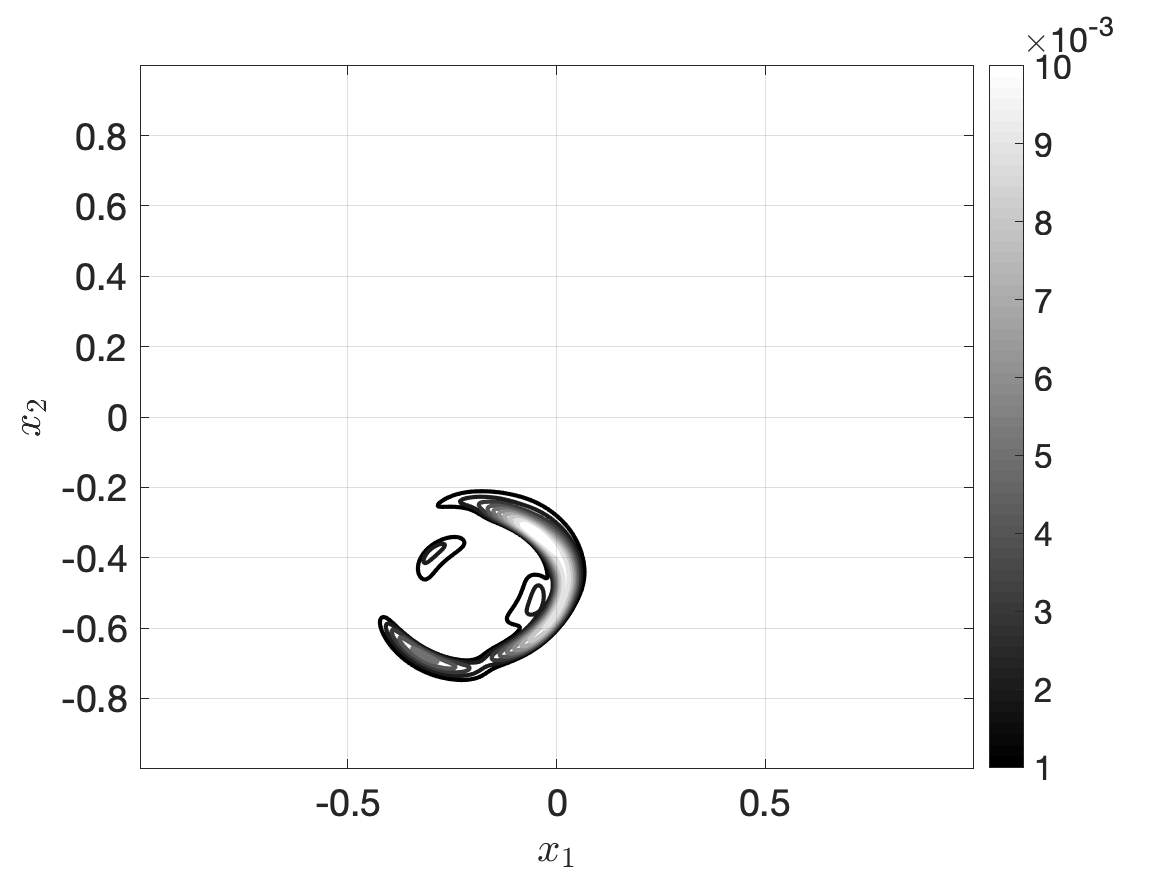}}
\caption{\textit{Results for test case-3. Results computed with $N_\mu = N_t = 6$ and $N_x = 500$. }}\label{test-3: reduced mesh}
\end{figure} 

\section{Conclusions}
We propose an online-adaptive hyper-reduction technique for the nonlinear reduced order modelling of transport dominated problems. Our nonlinear approximation space is a span of shifted snapshots and we seek a solution using residual minimization. Through a cost analysis, we conclude that residual minimization is (at least) as expensive as the full-order model. To reduce the cost of residual minimization, we perform residual minimization over a reduced mesh i.e., over a subset of the full mesh. Using numerical and analytical examples we show that, similar to the solution, the residual exhibits a transport-type behaviour. This makes the use of a fixed parameter-independent reduced mesh both inaccurate and inefficient. To account for the transport-type behaviour of the residual, we introduce online-adaptivity in the reduced mesh.

Empirically, we establish that for the same size of the reduced mesh, the online-adaptive technique greatly outperforms a non-adaptive technique. For a sufficiently small reduced mesh---almost $1\%$ to $2\%$ the  size of the full mesh---the adaptive technique provides reasonable accuracy. In contrast, for such small sizes of the reduced mesh, the non-adaptive technique lead to an unstable reduced-order model, resulting in oscillations and extremely large error values. Nonetheless, at the expense of a high computational cost, increasing the size of the reduced mesh to about $20\%$-$50\%$ of the total mesh size improved the accuracy of the non-adaptive technique. 

\appendix
\section{Example demonstrating the role of $\varphi$}\label{app: example}
\begin{example}[Introducing regularity via $\varphi$]\label{example}
Consider the set $\mcal G:=\{g(\cdot,z)\hsp :\hsp \mu\in Z\}\subset L^2(\mbb R)$, where $g(\cdot,z)$ is a step function that scales and shifts to the right, and is given as
\begin{gather*}
g(x,z):=\begin{cases}
1+z,&\hsp x\leq z \\
0,&\hsp x > z
\end{cases},\hspB z\in\mcal Z := [0,1].
\end{gather*}
For every $x\in \Omega$, $g(x,\cdot)$ belongs to $H^{1/2}(\mcal Z)$, which prohibits its approximability in a linear space. Indeed, one can prove that the Kolmogorov m-width of $\mcal G$ scales as $\mcal O(1/\sqrt{m})$---see \cite{PeterBook} for a proof.

For some $\hat z\in\mcal Z$, consider the set $\mcal G_\varphi$ that consists of all the step functions shifted such that their discontinuities are aligned with the discontinuity in $g(\cdot,\hat z)$
\begin{align*}
\mcal G_\varphi : = &\{g(\varphi(\cdot,z,\hat z),\hat z)\hsp :\hsp \varphi(x,z,\hat z) = x-(z-\hat z),\hsp\hat z\in\mcal Z\},\\= &\{(1+\hat z) g(\cdot,z)\hsp :\hsp \hat z \in \mcal Z\}.
\end{align*}
We conclude that, for all $x\in\Omega$, the function $g(\varphi(x,\cdot,\hat z),\hat z)$ is smooth. Furthermore, one can conclude that $\mcal G_{\varphi}$ is contained in the span of any single function taken from $\mcal G_{\varphi}$. Thus, for $m\geq 1$, its Kolmogorov m-width is zero. 
\end{example}

\subsection{Snapshots of the shift}\label{sec: snap sptransf}
As mentioned in the introduction, we want $u(\varphi(x,z,\cdot),\cdot)$ to be sufficiently regular. To this end, for all $z^{(i)},z^{(j)}\in \mcal Z$, one seeks to (at least approximately) satisfy the matching condition 
\begin{gather}
\varphi(\mcal D(z^{(i)}),z^{(j)},z^{(i)}) = \mcal D(z^{(i)}), \label{match feature}
\end{gather}
where $\mcal D(z^{(i)})\subset\Omega$ and $\mcal D(z^{(j)})\subset\Omega$ represent the point/curve/surface of discontinuity in $u(\cdot,z^{(i)})$ and $u(\cdot,z^{(j)})$, respectively. With our spatial shift ansatz for $\varphi$ given in \eqref{ansatz varphi}, the matching condition transforms to
\begin{gather}
\mcal D(z^{(i)}) = \mcal D(z^{(j)}) + c(z^{(i)},z^{(j)}).\label{shift via match}
\end{gather}
To approximate $\mcal D(z^{(j)})$ (and $\mcal D(z^{(i)})$), we apply to $u_N(\cdot,z^{(i)})$ the multi-resolution-analysis (MRA) based troubled cell indicator proposed in \cite{MRAdetect2014}---any other shock-detection technique (for instance, from \cite{ShockSupConvg,ShockNN}) also suffices.

In general, the above condition is (very) restrictive. For instance, in a 1D spatial domain, the condition holds only if the solution has a single shock or multiple shocks that move with the same velocity. However, the condition is violated for two shocks moving with different velocities. Furthermore, in a multi-dimensional setting, even a single shock that changes in length violates the above condition. 

Despite the restrictions of the above condition, empirically, we observe that a spatial shift provides accurate results for problems that approximately satisfy the above relation. For instance, in a multi-dimensional setting, $\mcal D(z^{(i)})$ might be a translation of $\mcal D(z^{(j)})$ but with an elongation. If the elongation is not significant then, we recover a reasonable snapshot transformation via shifting. Similarly, when $\mcal D(z^{(i)})$ has the same length as $\mcal D(z^{(j)})$ but is both a translation and a rotation of $\mcal D(z^{(j)})$, we expect a spatial shift to provide reasonable results  if the rotation is not significant.

For the above reasons, we do not strictly impose the discontinuity matching condition given in \eqref{shift via match}. Rather, we develop upon the L2-minimization technique proposed in \cite{Welper2017,RimTR,Rowley2001,Nair,RegisterMOR}. We define the set $\mcal B(z^{(j)},z^{(i)})\subset\mbb R^d$ that contains all possible shifts that match the respective points in $\mcal D(z^{(j)})$ and $\mcal D(z^{(i)})$. Equivalently,
\begin{gather}
\mcal B(z^{(j)},z^{(i)}):=\{c^*\hsp :\hsp c^* = x^*_i-x^*_j,\hsp x^*_i\in \mcal D(z^{(i)}),\hsp x^*_j\in \mcal D(z^{(j)})\}. 
\end{gather}
Out of all the possible shifts in $\mcal B(z^{(j)},z^{(i)})$, we select the one that solves the minimization problem
\begin{equation}
\begin{gathered}
c(z^{(j)},z^{(i)}) =\argmin_{c^*\in \mcal B(z^{(j)},z^{(i)})} \|u_N(\Theta[c^*],z^{(i)})-u_N(\cdot,z^{(j)})\|_{L^2(\Omega)}. \label{def c}
\end{gathered}
\end{equation}
Above, $\Theta[c^*](x) = x-c^*$ and is as given in \eqref{ansatz varphi}. We solve the above problem via enumeration. In our numerical experiments, the set $\mcal B(z^{(j)},z^{(i)})$ is not too large and a solution via enumeration is affordable.

\section{Shifting residuals}\label{app: shift snap}
The definition of the residual given in \eqref{def residual} provides
\begin{equation}
\begin{aligned}
\mcal T[-c(t_{k+1},\mu,z_{ref})]&\resFV(U_m(t_{k+1},\mu),U_m(t_{k},\mu))\\
& = \mcal T[-c(t_{k+1},\mu,z_{ref})]\left(U_m(t_{k+1},\mu)-(U_m(t_{k},\mu) + \Delta t\times \mcal F(U_m(t_{k},\mu)))\right).
\end{aligned}
\end{equation}
We simplify the different terms appearing on the right. Using the matrix-product form of the reduced snapshot $U_m(t_{k+1},\mu)$, we find 
\begin{equation}
\begin{aligned}
&\mcal T[-c(t_{k+1},\mu,z_{ref})]U_m(t_{k+1},\mu)\\
 = &\sum_{i}\alpha_i(t_{k+1},\mu) \mcal T[-c(t_{k+1},\mu,z_{ref})]\mcal T[c(t_{k+1},\mu,\zRef{i})]U(\zRef{i})\\
=&\sum_{i}\alpha_i(t_{k+1},\mu) \mcal T[c(t_{k+1},\mu,\zRef{i})-c(t_{k+1},\mu,z_{ref})]U(\zRef{i})\\
=&\sum_{i}\alpha_i(t_{k+1},\mu) \mcal T[c(z_{ref},t_{k+1},\mu) + c(t_{k+1},\mu,\zRef{i})]U(\zRef{i})\\
=&\sum_{i}\alpha_i(t_{k+1},\mu) \mcal T[c(z_{ref},\zRef{i})]U(\zRef{i}).
\end{aligned}
\end{equation}
The third and the fourth equalities are a result of (C1) and (C2), respectively. Using the assumed $L^{\infty}$ stability of the ROM, we find
\begin{equation}
\begin{aligned}
\mcal T[-c(t_{k+1},\mu,z_{ref})]U_m(t_{k},\mu)=\sum_{i}\alpha_i(t_{k},\mu) \mcal T[c(z_{ref},\zRef{i})]U(\zRef{i}) + \mcal O(1).
\end{aligned}
\end{equation}
The commutation property (C3) simplifies the $\mcal F$ term and provides
\begin{equation}
\begin{aligned}
&\Delta t\times \mcal T[-c(t_{k+1},\mu,z_{ref})]\mcal F(U_m(t_{k},\mu)))\\
& = \Delta t\times \mcal F\left(\sum_{i}\alpha_i(t_{k},\mu) \mcal T[c(z_{ref},\zRef{i})]U(\zRef{i}) + \mcal O(1)\right). 
\end{aligned}
\end{equation}
Since $\mcal F\in W^{1,\infty}(\mbb R^N)$, we find the desired result. 
\newpage
\bibliographystyle{apa}
\bibliography{papers}

\begin{thebibliography}{}

\bibitem[\protect\astroncite{Abgrall et~al.}{2016}]{AbgrallL1}
Abgrall, R., Amsallem, D., and Crisovan, R. (2016).
\newblock Robust model reduction by {L}1-norm minimization and approximation
  via dictionaries: application to nonlinear hyperbolic problems.
\newblock {\em Advanced Modeling and Simulation in Engineering Sciences},
  3(1):1.

\bibitem[\protect\astroncite{Astrid}{2004}]{Astrid}
Astrid, P. (2004).
\newblock Fast reduced order modeling technique for large scale {LTV} systems.
\newblock In {\em Proceedings of the 2004 American Control Conference},
  volume~1, pages 762--767.

\bibitem[\protect\astroncite{{Astrid} et~al.}{2008}]{KarenColloc}
{Astrid}, P., {Weiland}, S., {Willcox}, K., and {Backx}, T. (2008).
\newblock Missing point estimation in models described by proper orthogonal
  decomposition.
\newblock {\em IEEE Transactions on Automatic Control}, 53(10):2237--2251.

\bibitem[\protect\astroncite{Benner et~al.}{2015}]{PeterReview}
Benner, P., Gugercin, S., and Willcox, K. (2015).
\newblock A survey of projection-based model reduction methods for parametric
  dynamical systems.
\newblock {\em SIAM Review}, 57(4):483--531.

\bibitem[\protect\astroncite{Benner et~al.}{2017}]{PeterBook}
Benner, P., Ohlberger, M., Cohen, A., and Willcox, K. (2017).
\newblock {\em Model Reduction and Approximation}.
\newblock Society for Industrial and Applied Mathematics, Philadelphia, PA.

\bibitem[\protect\astroncite{Cagniart et~al.}{2019}]{Cagniart2019}
Cagniart, N., Maday, Y., and Stamm, B. (2019).
\newblock Model order reduction for problems with large convection effects.
\newblock In {\em Contributions to Partial Differential Equations and
  Applications}, pages 131--150. Springer International Publishing, Cham.

\bibitem[\protect\astroncite{Carlberg et~al.}{2013}]{GNAT}
Carlberg, K., Farhat, C., Cortial, J., and Amsallem, D. (2013).
\newblock The {GNAT} method for non-linear model reduction: Effective
  implementation and application to computational fluid dynamics and turbulent
  flows.
\newblock {\em Journal of Computational Physics}, 242:623 -- 647.

\bibitem[\protect\astroncite{{C}ercignani}{1988}]{Carlos}
{C}ercignani, C. (1988).
\newblock {\em The {B}oltzmann Equation and Its Applications}.
\newblock Springer, 67 edition.

\bibitem[\protect\astroncite{Dahmen et~al.}{2014}]{DoubleGreedy}
Dahmen, W., Plesken, C., and Welper, G. (2014).
\newblock Double greedy algorithms: Reduced basis methods for transport
  dominated problems.
\newblock {\em ESAIM: M2AN}, 48(3):623--663.

\bibitem[\protect\astroncite{Ehrlacher et~al.}{2019}]{Metric2019}
Ehrlacher, V., Lombardi, D., Mula, O., and Vialard, F.-X. (2019).
\newblock Nonlinear model reduction on metric spaces. {A}pplication to
  one-dimensional conservative {PDE}s in {W}asserstein spaces.
\newblock {\em ESAIM: Mathematical Modelling and Numerical Analysis}.

\bibitem[\protect\astroncite{Eymard et~al.}{2000}]{FVNotes}
Eymard, R., Gallou{\"e}t, T., and Herbin, R. (2000).
\newblock Finite volume methods.
\newblock {\em Handbook of numerical analysis}, 7:713--1018.

\bibitem[\protect\astroncite{Gerbeau and Lombardi}{2014}]{LaxPairs}
Gerbeau, J.~F. and Lombardi, D. (2014).
\newblock Approximated {L}ax pairs for the reduced order integration of
  nonlinear evolution equations.
\newblock {\em Journal of Computational Physics}, 265:246 -- 269.

\bibitem[\protect\astroncite{Krivodonova et~al.}{2004}]{ShockSupConvg}
Krivodonova, L., Xin, J., Remacle, J.-F., Chevaugeon, N., and Flaherty, J.
  (2004).
\newblock Shock detection and limiting with discontinuous {G}alerkin methods
  for hyperbolic conservation laws.
\newblock {\em Applied Numerical Mathematics}, 48(3):323 -- 338.

\bibitem[\protect\astroncite{Lee and Carlberg}{2020}]{KevinAuto}
Lee, K. and Carlberg, K.~T. (2020).
\newblock Model reduction of dynamical systems on nonlinear manifolds using
  deep convolutional autoencoders.
\newblock {\em Journal of Computational Physics}, 404:108973.

\bibitem[\protect\astroncite{LeVeque}{2002}]{LevequeBook}
LeVeque, R.~J. (2002).
\newblock {\em Finite Volume Methods for Hyperbolic Problems}.
\newblock Cambridge Texts in Applied Mathematics. Cambridge University Press.

\bibitem[\protect\astroncite{Mojgani and Balajewicz}{2020}]{mojgani2020}
Mojgani, R. and Balajewicz, M. (2020).
\newblock Physics-aware registration based auto-encoder for convection
  dominated {PDE}s.
\newblock {\em arXiv:2006.15655}.

\bibitem[\protect\astroncite{Mowlavi and Sapsis}{2018}]{Sapsis2018}
Mowlavi, S. and Sapsis, T.~P. (2018).
\newblock Model order reduction for stochastic dynamical systems with
  continuous symmetries.
\newblock {\em SIAM Journal on Scientific Computing}, 40(3):A1669--A1695.

\bibitem[\protect\astroncite{Nair and Balajewicz}{2019}]{Nair}
Nair, N.~J. and Balajewicz, M. (2019).
\newblock Transported snapshot model order reduction approach for parametric,
  steady-state fluid flows containing parameter-dependent shocks.
\newblock {\em International Journal for Numerical Methods in Engineering},
  117(12):1234--1262.

\bibitem[\protect\astroncite{Ohlberger and Rave}{2013}]{frozenROM}
Ohlberger, M. and Rave, S. (2013).
\newblock Nonlinear reduced basis approximation of parameterized evolution
  equations via the method of freezing.
\newblock {\em Comptes Rendus Mathematique}, 351(23):901 -- 906.

\bibitem[\protect\astroncite{Peherstorfer}{2018}]{Benjamin2018model}
Peherstorfer, B. (2018).
\newblock Model reduction for transport-dominated problems via online adaptive
  bases and adaptive sampling.
\newblock {\em arXiv:1812.02094}.

\bibitem[\protect\astroncite{Pissanetzky and Basombrío}{1981}]{SAM}
Pissanetzky, S. and Basombrío, F.~G. (1981).
\newblock Efficient calculation of numerical values of a polyhedral function.
\newblock {\em International Journal for Numerical Methods in Engineering},
  17(2):231--237.

\bibitem[\protect\astroncite{Quarteroni et~al.}{2016}]{RBBook}
Quarteroni, A., Manzoni, A., and Negri, F. (2016).
\newblock {\em Reduced Basis Methods for Partial Differential Equations: An
  Introduction}.
\newblock Springer International Publishing.

\bibitem[\protect\astroncite{Raissi et~al.}{2019}]{NNPDE}
Raissi, M., Perdikaris, P., and Karniadakis, G. (2019).
\newblock Physics-informed neural networks: A deep learning framework for
  solving forward and inverse problems involving nonlinear partial differential
  equations.
\newblock {\em Journal of Computational Physics}, 378:686 -- 707.

\bibitem[\protect\astroncite{Ray and Hesthaven}{2019}]{ShockNN}
Ray, D. and Hesthaven, J.~S. (2019).
\newblock Detecting troubled-cells on two-dimensional unstructured grids using
  a neural network.
\newblock {\em Journal of Computational Physics}, 397:108845.

\bibitem[\protect\astroncite{Reiss et~al.}{2018}]{sPOD}
Reiss, J., Schulze, P., Sesterhenn, J., and Mehrmann, V. (2018).
\newblock The shifted proper orthogonal decomposition: A mode decomposition for
  multiple transport phenomena.
\newblock {\em SIAM Journal on Scientific Computing}, 40(3):A1322--A1344.

\bibitem[\protect\astroncite{Rim and Mandli}{2018}]{RimOT}
Rim, D. and Mandli, K.~T. (2018).
\newblock Displacement interpolation using monotone rearrangement.
\newblock {\em SIAM/ASA Journal on Uncertainty Quantification},
  6(4):1503--1531.

\bibitem[\protect\astroncite{Rim et~al.}{2018}]{RimTR}
Rim, D., Moe, S., and LeVeque, R.~J. (2018).
\newblock Transport reversal for model reduction of hyperbolic partial
  differential equations.
\newblock {\em SIAM/ASA Journal on Uncertainty Quantification}, 6(1):118--150.

\bibitem[\protect\astroncite{Rim et~al.}{2019}]{MATS}
Rim, D., Peherstorfer, B., and Mandli, K.~T. (2019).
\newblock Manifold approximations via transported subspaces: Model reduction
  for transport-dominated problems.
\newblock {\em arXiv:1912.13024}.

\bibitem[\protect\astroncite{Rowley and Marsden}{2000}]{Rowley2001}
Rowley, C.~W. and Marsden, J.~E. (2000).
\newblock Reconstruction equations and the {K}arhunen–{L}o{\'e}ve expansion
  for systems with symmetry.
\newblock {\em Physica D: Nonlinear Phenomena}, 142(1):1 -- 19.

\bibitem[\protect\astroncite{Taddei}{2020}]{RegisterMOR}
Taddei, T. (2020).
\newblock A registration method for model order reduction: Data compression and
  geometry reduction.
\newblock {\em SIAM Journal on Scientific Computing}, 42(2):A997--A1027.

\bibitem[\protect\astroncite{{Taddei, T.} et~al.}{2015}]{RBHyp}
{Taddei, T.}, {Perotto, S.}, and {Quarteroni, A.} (2015).
\newblock Reduced basis techniques for nonlinear conservation laws.
\newblock {\em ESAIM: M2AN}, 49(3):787--814.

\bibitem[\protect\astroncite{Vuik and Ryan}{2014}]{MRAdetect2014}
Vuik, M.~J. and Ryan, J.~K. (2014).
\newblock Multiwavelet troubled-cell indicator for discontinuity detection of
  discontinuous {G}alerkin schemes.
\newblock {\em Journal of Computational Physics}, 270:138 -- 160.

\bibitem[\protect\astroncite{Welper}{2017}]{Welper2017}
Welper, G. (2017).
\newblock Interpolation of functions with parameter dependent jumps by
  transformed snapshots.
\newblock {\em SIAM Journal on Scientific Computing}, 39(4):A1225--A1250.

\bibitem[\protect\astroncite{Welper}{2020}]{WelperHighRes}
Welper, G. (2020).
\newblock Transformed snapshot interpolation with high resolution transforms.
\newblock {\em SIAM Journal on Scientific Computing}, 42(4):A2037--A2061.

\end{thebibliography}

\end{document}